\documentclass[12pt]{amsart}
\usepackage{fullpage}
\usepackage[utf8]{inputenc}
\usepackage{amssymb}
\usepackage{amsmath,amsthm}
\usepackage{color}

\usepackage{hyperref}
\usepackage{mathtools}
\usepackage{tikz}
\usepackage{comment}
\usepackage{soul}
\usepackage{cancel}

\newtheorem{theorem}{Theorem}

\newtheorem{lemma}[theorem]{Lemma}
\newtheorem{corollary}[theorem]{Corollary}

\newtheorem{example}[theorem]{Example}

\newtheorem*{acknowledgement*}{Acknowledgement}

\title{Transcendence measure of $e^{1/n}$}
\author[M. Dujella]{Marta Dujella}
\address[Marta Dujella]{Universit\"at Basel, Spiegelgasse 1, 4051 Basel, Switzerland.}\email{marta.dujella@unibas.ch}
\author[A.-M. Ernvall-Hytonen]{Anne-Maria Ernvall-Hyt\"onen}
	\address[Anne-Maria Ernvall-Hyt\"onen]{University of Helsinki, PL 68, 00014 Helsingin yliopisto, Finland.}\email{anne-maria.ernvall-hytonen@helsinki.fi}
\author[L. Frey]{Linda Frey}
\address[Linda Frey]{Universit\"at G\"ottingen, Bunsenstra\ss e 3-5, 37073 G\"ottingen, Germany.}\email{lindafrey89@gmail.com}
\author[B. Roy]{Bidisha Roy}
\address[Bidisha Roy]{Scuola Normale di Pisa, Piazza dei Cavlieri,7, 56126, Pisa, Italy.}\email{brroy123456@gmail.com}

\thanks{We thank the organizers of the conference Women in Numbers Europe 4, especially but not exclusively Valentijn Karemaker and Nirvana Coppola. This conference set the foundation for this article. Furthermore, we thank the Universiteit Utrecht for granting unlimited coffee.}

\begin{document}

\maketitle
\begin{abstract}
   For a given transcendental number $\xi$ and for any polynomial $P(X)=: \lambda_0+\cdots+\lambda_k X^k \in \mathbb{Z}[X]$, we know that $ P(\xi) \neq 0.$  Let $k \geq 1$ and $\omega (k, H)$ be the infimum of the numbers $r > 0$ satisfying the estimate
$$
\left|\lambda_0+\lambda_1 \xi+\lambda_2 \xi^{2}+ \ldots +\lambda_k\xi^{k}\right| > \frac{1}{H^r},
$$
for all $(\lambda_0, \ldots ,\lambda_k)^T \in \mathbb{Z}^{k+1}\setminus\{\overline{0}\}$ with $\max_{1\le i\le k} \{|\lambda_i|\} \le H$.  Any function greater than or equal to $\omega (k,H)$ is a {\it transcendence measure of $\xi$}. In this article, we find out a transcendence measure of $ e^{1/n}$ which improves a result proved by Mahler(\cite{Mahler}) in 1975. 
\end{abstract}

\section{Introduction}

Let $\xi$ be a transcendental number. Then $P(\xi)\ne 0$ for any polynomial $P$ with integer coefficient.

Let $k, H \ge 1$ and $\omega (k, H)$ is the infimum of the numbers $r > 0$ satisfying the estimate
\begin{equation}\label{ensikaava}
\left|\lambda_0+\lambda_1 \xi+\lambda_2 \xi^{2}+ \ldots +\lambda_k\xi^{k}\right| > \frac{1}{H^r},
\end{equation}
for all $\overline{\lambda}=(\lambda_0, \ldots ,\lambda_k)^T \in \mathbb{Z}^{k+1}\setminus\{\overline{0}\}$ with $\max_{1\le i\le k} \{|\lambda_i|\} \le H$. Any function greater than or equal to $\omega (k,H)$ is a {\it transcendence measure of $\xi$}.

Bounding transcendence measures of different constants is a classical problem in number theory. It is widely investigated in particular in the context of the Napier's constant $e$. In 1873, Hermite proved it to be transcendental \cite{hermite1873}. This work started with Borel in 1899 \cite{borel1899} when proved that $\omega(k,H)<c\log\log H$ for some constant $c$ depending on $k$. Popken \cite{popken1928,popken1929} improved the bound to $k+\frac{c}{\log\log H}$, where $c$ depends on $k$ in 1928--1929. Mahler \cite{mahler1931} made the dependance on $k$ explicit in 1931 by deriving the bound $k+\frac{ck^2\log(k+1)}{\log\log H}$, where $c$ is an absolute constant. This result is already of the shape of modern state of art results. In 1991, Khassa and Srinivasan \cite{khassasrinivasan1991} showed that $c=98$ is valid for $\log\log Hd(k+1)^{6k}$ for some constant $d>e^{950}$. This result was improved by Hata in 1995 \cite{hata1995} who showed that one can choose $c=1$, and also considerably improved the lower bound for $H$. This was further improved by Ernvall-Hyt\"onen, Matala-aho and Sepp\"al\"a ion 2018 \cite{AMLOTA}.

In this paper, we concentrate on the expression
\[\left|\lambda_ke^{k/n}+\ldots +\lambda_1e^{1/n}+\lambda_0\right|,
\]
where $k \geq n \geq 2$. This gives a transcendence measure for roots of $e$. Since $e$ is transcendental, all its roots and powers are also transcendental. Ernvall-Hyt\"onen, Matala-aho and Sepp\"al\"a considered also sparse polynomials in the context of the transcendence measure of $e$. As a corollary, they also derived a transcendence measure for integer powers of $e$. However, we were not able to find any transcendence measures tailored for \emph{roots of $e$} in the literature. There are some general results in the literature, for instance, by Mahler \cite{Mahler} which can be used to derive a bound. Also, the generalized transcendence measure by Ernvall-Hyt\"onen, Lepp\"al\"a and Matala-aho can be used to derive a bound. 
Our bound will be compared to these bounds in Section \ref{compare}.

In this article, we prove the following bound:
\begin{theorem} Assume $k\geq n\geq 2$. We have
\begin{equation}\label{ensikaava}
\left|\lambda_0+\lambda_1 e^{1/n}+\lambda_2 e^{2/n}+ \ldots +\lambda_ke^{k/n}\right| > \frac{1}{H^r},
\end{equation}
where $r>\omega (k, H)$ and we can choose
\[
\omega(k,H)=k+\frac{k^2\log k}{\log \log H}\left(1+\frac{0.69}{\log k-1}\right),
\]
for $k\geq 5$ and $\omega(k,H)=k+\frac{k^2\log k}{\log \log H}d(k)$, where
\[
d(k)=\begin{cases}
& 3.319   \textrm{ for } k=2\\
& 1.145 \textrm{ for } k=3 \\
& 1.114 \textrm{ for }k=4 
\end{cases}
\]
and $\log H\geq s(n,k)e^{s(n,k)}$ with $s(n,k)=(k+n)(\log(k+n))^2$.
\end{theorem}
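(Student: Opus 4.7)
\medskip
\noindent\textbf{Proof plan.}
The plan is to adapt the classical Hermite--Padé construction underlying the transcendence measure of $e$ to the algebraic point $x=1/n$, and to finish with the standard determinantal argument. Set $\Lambda:=\lambda_0+\lambda_1 e^{1/n}+\cdots+\lambda_k e^{k/n}$. For a positive integer parameter $m$ (to be optimised at the end in terms of $H,k,n$) and for each $i\in\{0,1,\ldots,k\}$, I would introduce an auxiliary polynomial of the shape
\[
f_i(t)=n^{(k+1)m-1}(nt-i)^{m-1}\prod_{\substack{0\le j\le k\\ j\ne i}}(nt-j)^{m}\in\mathbb{Z}[t],
\]
so that $f_i$ vanishes to high order at each of the points $0,1/n,\ldots,k/n$, with a slightly lower order at $i/n$. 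Applying Hermite's identity to $F_i(x):=\sum_{s\ge 0}f_i^{(s)}(x)$ and evaluating at $x=j/n$ for $j=0,\dots,k$ produces $k+1$ simultaneous $\mathbb{Z}$-linear forms
\[
\varepsilon_i=\sum_{j=0}^{k}A_{ij}\,e^{j/n},\qquad A_{ij}\in\mathbb{Z},
\]
whose integer coefficients and tiny remainders can be controlled explicitly.

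\medskip
\noindent Next I would run the standard determinantal argument. Consider the $(k+1)\times(k+1)$ matrix $M$ whose first row is $(\lambda_0,\ldots,\lambda_k)$ and whose remaining $k$ rows are $(A_{ij})_{0\le j\le k}$ for an appropriate set of indices $i$ that makes $M$ non-singular (linear independence follows from the $\mathbb{Q}$-linear independence of $1,e^{1/n},\ldots,e^{k/n}$ together with the distinct vanishing orders of the $f_i$). Substituting $\sum_{j}A_{ij}e^{j/n}=\varepsilon_i$ and the analogous relation for $\Lambda$, the determinant $\det M$ becomes a $\mathbb{Z}$-linear combination of $\Lambda$ and the $\varepsilon_i$, with coefficients equal to $k\times k$ minors of $(A_{ij})$. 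Because $\det M$ is a non-zero integer, one obtains an inequality of the form
\[
1\ \le\ |\det M|\ \le\ H\cdot\Delta\cdot|\Lambda|\ +\ (k+1)\,H\cdot\Delta\cdot\max_i|\varepsilon_i|,
\]
where $\Delta$ denotes an upper bound for the maximal $k\times k$ minor. Rearranging yields a lower bound on $|\Lambda|$ provided $H\cdot\Delta\cdot\max_i|\varepsilon_i|<1/2$; this is precisely the role of the hypothesis $\log H\ge s(n,k)e^{s(n,k)}$ with $s(n,k)=(k+n)(\log(k+n))^2$.

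\medskip
\noindent The remaining content is analytic. Stirling-type estimates, together with the explicit shape of $f_i$, yield $\log\max_{i,j}|A_{ij}|\le (k+1)m\log(kn)+O(m)$; and bounding $|\varepsilon_i|$ by $\max_{t\in[0,k/n]}|f_i(t)|\cdot e^{k/n}\cdot(k/n)$, corrected by the factorials created by $F_i$, gives $\log\max_i|\varepsilon_i|\le -(k+1)m\log m+O(m\log k)$. Hadamard's bound on $k\times k$ subdeterminants then controls $\Delta$ in the same way. Substituting into the inequality above and choosing $m$ so as to balance $m\log m$ against $\log H$ forces the main term $k+k^2\log k/\log\log H$, while a careful accounting of sub-leading terms produces the refining factor $(1+0.69/(\log k-1))$ for $k\ge 5$. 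The small-$k$ cases $k\in\{2,3,4\}$ must be handled separately, with the constants $d(k)$ coming from a direct (non-asymptotic) version of the same optimisation.

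\medskip
\noindent The hardest part, I expect, will not be the shape of the bound but the explicit constants: turning the classical $k+k^2\log k/\log\log H\cdot(1+o(1))$ into the quantitative factor $1+0.69/(\log k-1)$ requires tracking every constant through Stirling's formula, through the Hadamard bound on the minors, and through the optimisation in $m$, with nothing swept into an asymptotic symbol. A second delicate point is the dependence on $n$: the substitution $x=1/n$ simultaneously enlarges the integer coefficients $A_{ij}$ (via the cleared denominator $n^{(k+1)m}$) and shrinks $|\varepsilon_i|$ (via the shorter integration interval $[0,k/n]$), and the precise way in which these two effects balance is what determines the explicit threshold on $\log H$ and distinguishes the present statement from the $n=1$ results of Hata and of Ernvall-Hyt\"onen--Matala-aho--Sepp\"al\"a.
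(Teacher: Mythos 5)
Your architecture is the same as the paper's: the paper also uses the Hermite--Pad\'e simultaneous approximations to $1,e^{1/n},\dots,e^{k/n}$ built from $\Omega(x)=\prod_{j=0}^k(j/n-x)^{\ell_j}$ with the multiplicity dropped by one at a single index $u$ (your $f_i$), clears denominators by a power of $n$ and a factor $1/(\ell-1)!$, bounds the coefficients and remainders by splitting the defining integrals and applying Stirling, and then feeds these rates into a determinant-type lower bound (the paper outsources that last step to a ready-made lemma of Ernvall-Hyt\"onen--Matala-aho--Sepp\"al\"a rather than redoing the $(k+1)\times(k+1)$ determinant expansion by hand). So the plan is the right one.

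However, the two explicit rates you assert in the analytic paragraph are both wrong, and in a way that breaks the optimisation rather than merely costing constants. The integer coefficients are the values $F_i(j/n)=\sum_s f_i^{(s)}(j/n)$ (equivalently, in the paper's notation, $A_0(t)=\sum_i t^{L-i}\, i!\,\sigma_i$), and the top derivatives contribute a factor $L!=((k+1)m-1)!$; after dividing out the common factor $(m-1)!$ one gets
\[
\log\max_{i,j}|A_{ij}|\;\approx\;km\log m+O\bigl(km\log(kn)\bigr),
\]
not $(k+1)m\log(kn)+O(m)$ --- you have bounded only $\sup|f_i|$, not the derivative sums. Dually, the remainder $e^{j/n}\int_0^{j/n}e^{-t}f_i(t)\,dt$ is \emph{not} small before normalisation (it is of size roughly $(k!)^m$), and the only factorial available to divide by is the single $(m-1)!$ common to all coefficients, so
\[
\log\max_i|\varepsilon_i|\;\approx\;-m\log m+O\bigl(km\log k\bigr),
\]
not $-(k+1)m\log m+O(m\log k)$; compare the paper's $q(\ell)=k\ell\log\ell+\cdots$ and $-r(\ell)=-\ell\log\ell+\ell(k\log k-\cdots)$. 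This matters structurally: the main term $H^{-k}$ arises precisely as the ratio $\tfrac{km\log m}{m\log m}=k$ of the two correct rates. With your stated rates the same determinant argument would output a bound tending to $H^{-1}$, which is impossible (a pigeonhole argument shows the exponent cannot beat $k$), so the error would surface as soon as you tried to balance $m$ against $\log H$. A secondary, smaller point: the nonsingularity of your matrix $M$ does not follow from the $\mathbb{Q}$-linear independence of $1,e^{1/n},\dots,e^{k/n}$; it is the algebraic non-vanishing of the $(k+1)\times(k+1)$ determinant of the Pad\'e coefficients (an order-of-vanishing versus degree count), which is exactly the determinant hypothesis the paper imports from the cited lemma.
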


We follow the approach used in \cite{AMLOTA}. 

\section{Earlier results and comparisons to our bound}
\label{compare}

The following result can be obtained as a corollary of a much more general result presented in by Mahler in 1975.

\begin{theorem}[Theorem 1 in \cite{Mahler}] Take $a_i = i$ for $i=0, \ldots, k$ (with $k\geq 2$) and $a=n$ a positive integer. Let $\lambda_0, \ldots, \lambda_k$ be integers not all zero (in Mahler's paper $x_0, \ldots, x_k$), $C(r) = (k+1)^2r \sqrt{(\log(n+k+1)\log r}$ and $T$ be the product of the non-zero $\lambda_i$. Then for $r$ the smallest integer for which \begin{align*}
    \frac{(r-1)!}{e^{2C(r-1)}} \leq \max \{ |x_i| \} < \frac{r!}{e^{2C(r)}}
\end{align*}
we have
\begin{align}
    |T(\lambda_0 + \lambda_1 e^\frac1n + \ldots + \lambda_k e^\frac{k}{n})| > \frac{\max \{ |\lambda_i| \}}{e^{(2(k+1)-\frac14) C(r)}}.
\end{align}
\end{theorem}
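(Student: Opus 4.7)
The plan is to follow Mahler's Hermite--Padé scheme adapted to the exponentials $e^{i/n}$, $i=0,\ldots,k$. For the parameter $r$ appearing in the hypothesis, and for each index $j\in\{0,1,\ldots,k\}$, construct an auxiliary polynomial of the shape
\[
f_j(x) \;=\; \frac{1}{(r-1)!}\; x^{\,r-\delta_{j,0}} \prod_{\substack{0\le i\le k\\ i\ne j}}(nx-i)^{r},
\]
and apply Hermite's identity $\int_{0}^{z}f(x)e^{z-x}\,dx = e^{z}F(0)-F(z)$ with $F=\sum_{\ell\ge 0}f^{(\ell)}$. Evaluating at $z=i/n$ for $i=0,\ldots,k$ produces $k+1$ simultaneous rational approximations
\[
R_j \;=\; P_{j,0}+P_{j,1}\,e^{1/n}+\cdots+P_{j,k}\,e^{k/n},\qquad j=0,\ldots,k,
\]
in which the coefficients $P_{j,i}$ are integers after clearing the explicit $(r-1)!$ in the denominator (together with the customary $\mathrm{lcm}(1,\ldots,r)$ correction), and the remainders $R_j$ are small integrals.

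The first main step is to show that the integer matrix $(P_{j,i})_{0\le i,j\le k}$ is non-singular: since the $a_i=i$ are pairwise distinct, a Shidlovsky-type determinant computation shows $\det(P_{j,i})$ is a nonzero polynomial in $r$ of Vandermonde type, which is where the hypothesis $k\ge 2$ enters. The second main step is Cramer's rule. Let $L=\lambda_0+\lambda_1 e^{1/n}+\cdots+\lambda_k e^{k/n}$ and, assuming $|\lambda_{i_0}|=\max|\lambda_i|$, form the determinant $D$ obtained from $(P_{j,i})$ by replacing the $i_{0}$-th column by $(\lambda_0,\ldots,\lambda_k)^{T}$. On the one hand $D$ is a nonzero integer (up to a controlled denominator), so $|D|\ge 1$; on the other hand, cofactor expansion gives $D = M\cdot L +\sum_{j}(-1)^{j}M_{j}\,R_{j}$ where $M,M_{j}$ are $k\times k$ minors of $(P_{j,i})$, and rearranging yields the fundamental inequality
\[
|L|\;\ge\;\frac{1}{|M|}\Bigl(1-\sum_{j}|M_{j}|\,|R_{j}|\Bigr).
\]
The product $T=\prod_{\lambda_i\ne 0}\lambda_i$ on the left of Mahler's conclusion is the natural outcome of this determinantal set-up, since the minor $|M|$ is controlled by $|T|$ multiplied by factorial quantities coming from the explicit form of the $P_{j,i}$.

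The third step is the quantitative bookkeeping. Standard estimates on the Hermite integrals yield $|R_j|\ll C_{1}^{r}/(r-1)!$ and $|P_{j,i}|\ll C_{2}^{r}\cdot r!$ for $C_{1},C_{2}$ explicit in $n$ and $k$. Applying Stirling and then choosing $r$ to be the smallest integer satisfying the nested inequality $(r-1)!\,e^{-2C(r-1)}\le \max|\lambda_i| < r!\,e^{-2C(r)}$ converts the factorials into the factor $e^{-2C(r)}$, where $C(r)=(k+1)^{2}\,r\,\sqrt{\log(n+k+1)\log r}$: the factor $(k+1)^{2}r$ is forced by having $k+1$ functions each contributing degree $r$ in the Padé construction, and the square root appears after balancing the logarithmic contributions from $\log\max|\lambda_{i}|$, $\log n$, and $\log r$.

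The main obstacle will be pinning down the precise numerical constant $2(k+1)-\tfrac14$ in the final exponent. This requires (i) sharp control of the denominator growth, in particular the Chebyshev bound $\mathrm{lcm}(1,\ldots,r)\le e^{(1+o(1))r}$; (ii) a careful balance between the two competing error terms in the Cramer-rule inequality so that neither wastes a full factor of $e^{C(r)}$; and (iii) extracting the $-\tfrac14$ improvement from the sharp remainder in Stirling's formula together with the precise leading behaviour of $\det(P_{j,i})$ as a polynomial in $r$. The appearance of $T$ on the left is not itself an obstacle but a byproduct of the method: because the natural integer invariant produced by the approximation scheme is the determinant $D$, whose magnitude is only controlled up to a factor of $\prod|\lambda_{i}|$, the conclusion is of necessity a lower bound on $|T\cdot L|$ rather than on $|L|$ alone.
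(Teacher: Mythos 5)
This theorem is not proved in the paper at all: it is quoted (as a special case of Theorem 1 of Mahler's 1975 paper) purely for the purpose of comparison with the authors' own bound in Section \ref{compare}. So there is no in-paper argument to measure your proposal against; the only meaningful question is whether your outline would actually establish the stated inequality. It would not, as it stands. You correctly identify the architecture of Mahler's method --- Hermite--Pad\'e simultaneous approximations to $e^{i/n}$, a non-vanishing Shidlovsky-type determinant, a Cramer's-rule lower bound $|L|\ge |M|^{-1}(1-\sum_j |M_j||R_j|)$, and Stirling plus the choice of $r$ --- but the entire content of the theorem lives in the quantitative steps you explicitly defer: the explicit constants $C_1,C_2$ in $|R_j|\ll C_1^r/(r-1)!$ and $|P_{j,i}|\ll C_2^r\, r!$, the emergence of the specific function $C(r)=(k+1)^2 r\sqrt{\log(n+k+1)\log r}$, and above all the exponent $2(k+1)-\tfrac14$. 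Listing these as ``the main obstacle'' is an accurate diagnosis, but it means the proposal proves only a qualitative statement of the familiar shape, not this theorem. In a result of this kind the constants \emph{are} the theorem.

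There is also a structural slip in your auxiliary polynomial. You take $f_j(x)=\frac{1}{(r-1)!}x^{r-\delta_{j,0}}\prod_{i\ne j}(nx-i)^r$, i.e.\ you \emph{omit} the factor $(nx-j)$ entirely, and you lower the exponent of $x$ only when $j=0$. The standard construction (the one used elsewhere in this paper, via $\Omega(x,\overline{\alpha})=\prod_{i}(\alpha_i-x)^{\ell_i}$ with $\ell_j=\ell-1$ and $\ell_i=\ell$ for $i\ne j$) keeps every factor but reduces the multiplicity at $\alpha_j$ by exactly one; this is what makes the remainder $R_j$ vanish to the right order, keeps the degrees balanced, and produces a determinant that is a nonzero power of the relevant data rather than something that must be analysed ``as a polynomial in $r$''. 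With the factor omitted, the degrees of the $P_{j,i}$ and the orders of vanishing of the $R_j$ no longer match the pattern needed for the non-singularity argument you invoke. Finally, note that the factor $T$ in Mahler's conclusion is tied to his particular normalisation of the determinant by the nonzero coefficients; your heuristic explanation is plausible but would need to be made precise before the inequality $|D|\ge 1$ can be converted into the stated lower bound for $|T\cdot L|$.
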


The following result is due to Ernvall-Hyt\"onen, Lepp\"al\"a and Matala-aho, and can be obtained as a corollary of a much more general result presented in \cite{AMKATA}.

\begin{theorem}[Corollary of \cite{AMKATA}] We have
$$\left|\lambda_0+ \lambda_1 e^{1/n} + \cdots + \lambda_k e^{k/n} \right| > \frac{M^{1 - \hat{\delta}(M)}}{h_0 h_1 \ldots h_k},$$ where $ M = \max_{ 0 \leq i \leq k } \{ |\lambda_i | \}$, $ \hat{\delta}(M) \leq \frac{\hat{B} (\overline{\alpha})}{\sqrt{\log \log M}} \leq c_k k^2 \sqrt{\log( g_1 (\overline {\alpha}) (1+ g_3(\overline {\alpha})))}/ \sqrt{\log \log M}$ and $ h_ i = \max \{ 1, | \lambda_i| \}$, for $ i = 1, \ldots, k$.  Moreover, $ c_k = 13$ if $ k  < 3 $ and $12$ otherwise. 
\end{theorem}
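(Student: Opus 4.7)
Since the statement is labelled a corollary of the main theorem of \cite{AMKATA}, the plan is not to prove a new transcendence measure from scratch but rather to specialize that general theorem to the tuple $\overline{\alpha}=(e^{1/n},e^{2/n},\ldots,e^{k/n})$ and read off the quantitative constants stated here. First I would recall the setup of \cite{AMKATA}: given a tuple of transcendental numbers admitting simultaneous Hermite--Pad\'e approximations of type~I with quantitative control on denominators, degrees, and remainders, one gets a lower bound of the form
\[
|\lambda_0+\lambda_1\alpha_1+\cdots+\lambda_k\alpha_k|>\frac{M^{1-\hat{\delta}(M)}}{h_0 h_1\cdots h_k},
\]
with $\hat{\delta}(M)=\hat{B}(\overline{\alpha})/\sqrt{\log\log M}$ and $\hat{B}(\overline{\alpha})$ an explicit function of two quality-of-approximation invariants $g_1(\overline{\alpha})$ and $g_3(\overline{\alpha})$ that encode, respectively, the growth rate of the integer coefficients of the Pad\'e approximants and the decay rate of the remainder.

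Next I would verify that $(e^{1/n},\ldots,e^{k/n})$ satisfies the hypotheses of \cite{AMKATA} by producing the requisite Pad\'e data. The classical Hermite construction furnishes polynomials $A_{0,m},\ldots,A_{k,m}\in\mathbb{Z}[X]$ of degree at most $m$ such that the remainder
\[
R_m(x):=A_{0,m}(x)+A_{1,m}(x)e^{x}+\cdots+A_{k,m}(x)e^{kx}
\]
vanishes to high order at $x=0$, together with effective bounds on $|A_{i,m}(x)|$ and $|R_m(x)|$. Evaluating at $x=1/n$ and clearing denominators yields, for each $m$, an identity of the exact shape required by \cite{AMKATA} with coefficients in $\mathbb{Z}$. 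The invariants $g_1(\overline{\alpha})$ and $g_3(\overline{\alpha})$ then come directly from the asymptotic sizes of these integer coefficients and the decay rate of $R_m(1/n)$, which can be bounded in terms of $k$ and $n$ alone via standard factorial estimates for the Hermite construction.

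Substituting the resulting bounds on $g_1(\overline{\alpha})$ and $g_3(\overline{\alpha})$ into the formula for $\hat{B}$ provided in \cite{AMKATA} gives the chain
\[
\hat{\delta}(M)\leq\frac{\hat{B}(\overline{\alpha})}{\sqrt{\log\log M}}\leq\frac{c_k k^2\sqrt{\log(g_1(\overline{\alpha})(1+g_3(\overline{\alpha})))}}{\sqrt{\log\log M}}.
\]
The dichotomy $c_k=13$ for $k<3$ versus $c_k=12$ for $k\geq 3$ reflects a small-$k$ adjustment of numerical constants inside \cite{AMKATA}; I would read off these constants at $k=2$ and $k\geq3$ separately from their main theorem. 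The only real obstacle is bookkeeping: matching the notation of \cite{AMKATA} to the Pad\'e data for $e^{i/n}$ and verifying that the claimed explicit $c_k$ drops out rather than a slightly weaker constant. No new transcendence argument is required beyond the specialization.
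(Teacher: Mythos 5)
Your approach is essentially the same as the paper's: this statement is not proved in the paper at all, but simply quoted as a corollary of the main theorem of \cite{AMKATA}, followed by the specialization $g_1(\overline{\alpha})=n$ and $g_3(\overline{\alpha})=k/n$ for $\overline{\alpha}=(0,\tfrac{1}{n},\ldots,\tfrac{k}{n})$, which is exactly the specialization you describe. One small correction: in the framework of \cite{AMKATA} the tuple $\overline{\alpha}$ is the tuple of \emph{exponents} $(0,\tfrac{1}{n},\ldots,\tfrac{k}{n})$, not the values $(e^{1/n},\ldots,e^{k/n})$, and $g_1$, $g_3$ are arithmetic invariants of that exponent tuple (essentially its common denominator and the maximum modulus of its entries) rather than growth/decay rates of the Pad\'e data, so the verification step reduces to reading off these two numbers rather than re-deriving Hermite's construction.
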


In particular, for  $ \overline {\alpha} = (0, \frac{1}{n}, \ldots, \frac{k}{n})$ we have $g_1 (\overline {\alpha}) = n$ and $g_3(\overline {\alpha}) = \frac{k}{n}$. 
 Therefore, 
$$\left|\lambda_0+ \lambda_1 e^{1/n} + \cdots + \lambda_k e^{k/n} \right| > \frac{M} { h_0 \cdots h_k M ^{\frac{ c_k k^2 \sqrt{\log(n (1+ k/n))}}{ \sqrt{\log \log M}}}} . $$

Let us now compare these results with our bound.

\begin{example}
Let us look at the family of polynomials with $\frac{H}{2}\leq |\lambda_i|\leq H$ for all coefficients $\lambda_i$ when $1\leq i\leq k$ and compare our result with results in \cite{AMKATA} and \cite{Mahler}.

Our result gives the bound
\[
\left|\lambda_0+ \lambda_1 e^{1/n} + \cdots + \lambda_k e^{k/n} \right|>H^{-k-\frac{k^2\log k}{\log \log H}\left(1+\frac{0.639}{\log k-1}\right)}
\]

The bound by Ernvall-Hyt\"onen, Matala-aho and Lepp\"al\"a:
\begin{align*}
    \left|\lambda_0+ \lambda_1 e^{1/n} + \cdots + \lambda_k e^{k/n} \right| &> \frac{H} { h_0 \cdots h_k H ^{\frac{ c_k k^2 \sqrt{\log(n (1+ k/n))}}{ \sqrt{\log \log H}}}}
\end{align*}

This bound is certainly not better than
\[
\left(\frac{H}2\right)^{-k-\frac{12k^2 \sqrt{\log(n (1+ k/n))}}{\sqrt{\log \log H}}}=H^{-k+k\frac{\log 2}{\log H}-\frac{12k^2 \sqrt{\log(n+ k)}}{\sqrt{\log \log H}}} ,
\] which is weaker than ours for large values of $H$ because for large $\sqrt{\log\log H}$ grows slower than $\log\log H$.

Mahler 1975:
\begin{align}
    |\lambda_0 + \lambda_1 e^\frac1n + \ldots + \lambda_k e^\frac{k}{n}| > \frac{\max \{ |\lambda_i| \}}{Te^{(2(k+1)-\frac14) C(r)}},
\end{align}
where $T$ is the product of all non-zero $\lambda_i$'s.

\begin{align}\label{mahler1}
    \frac{\max \{ |\lambda_i| \}}{Te^{(2(k+1)-\frac14) C(r)}}
   &\leq \frac{1}{\left(\frac{H}{2}\right)^{k}e^{(2(k+1)-1/4)C(r)}}.
\end{align}

Mahler gives the bound $\frac{\log x}{\log\log x}<r<\frac{6\log x}{\log\log x}$, where in his notation $x$ is the maximum of the absolute values of the coefficients of the polynomial. In our setting, this inequality would approximately translate to
\[
\frac{\log H}{\log\log H}<r<\frac{6\log H}{\log\log H}.
\]
We are losing some accuracy here because we only expected the coefficients of the polynomial to be on the interval $[\frac{H}{2},H]$. However, for the current purposes, this is not an issue.

The denominator of \eqref{mahler1} can now be written as
\[
\left(\frac{H}{2}\right)^{k}e^{(2(k+1)-1/4)(2(k+1)-1/4)C(r)}=H^{k-\frac{k\log 2-(2(k+1)-1/4)C(r)}{\log H}}=H^{k-\frac{k\log 2}{\log H}+\frac{(2(k+1)-1/4)C(r)}{\log H}}.
\]
Let us now look at the expression $\frac{(2(k+1)-1/4)C(r)}{\log H}$. Let us use the expression for $C(r)$ and the bound for $r$:
\begin{multline*}
C(r) = (k+1)^2r \sqrt{(\log(n+k+1)\log r}\approx (k+1)^2\frac{\log H}{\log \log H}\sqrt{(\log(n+k+1)\log \frac{\log H}{\log\log H}}\\ \approx (k+1)^2\frac{\log H\sqrt{\log k}}{\sqrt{\log\log H}},
\end{multline*}
where we used the bound $n\leq k$ to estimate $\log (n+k+1)\approx \log k$, and that for large H, $\log \frac{\log H}{\log \log H}\approx \log \log H$. Hence,
\[
\frac{(2(k+1)-1/4)C(r)}{\log H}\approx \frac{(2(k+1)-1/4)}{\log H}\cdot (k+1)^2\frac{\log H\sqrt{\log k}}{\sqrt{\log\log H}}\approx \frac{2(k+1)^2\sqrt{\log k}}{\sqrt{\log\log H}}.
\]
Hence, our bound is also better than Mahler's bound, because $\sqrt{\log \log H}$ grows slower than $\log\log H$, and the numerator is bigger (dependance on $k^3$ instead of $k^2$).

\end{example}

\section{Preliminaries and the outline of the method}

Ernvall-Hyt\"onen, Sepp\"al\"a and Matala-aho \cite{AMLOTA} used the following approach:

Assume that there is a sequence of simultaneous approximations
\[
L_{m,j}(h)=B_{m,0}(h)\Theta_j+B_{m,j}(h)
\]
with $m=0,1,\dots ,k$ and $j=1,2,\dots,k$. Further assume $B_{m,j}(\ell)\in \mathbb{Z}$ for all $m,j\in \{0,1,\dots,k\}$. Assume further that the coefficients $B_{m,j}$ satisfy the following determinant condition:
\[
\begin{vmatrix}
B_{0,0} & B_{0,1} & \cdots & B_{0,k}\\
B_{1,0} & B_{1,1} & \cdots & B_{1,k}\\
\vdots & \vdots & \ddots & \vdots \\
B_{k,0} & B_{m,1} & \cdots & B_{k,k}\\
\end{vmatrix} \ne 0.
\]
Pick the functions $Q(h)$, $q(h)$, $R(h)$ and $r(h)$ to be such that they satisfy the following inequalities:
\[
B_{m,0}(h)\leq Q(h) =e^{q(h)}
\]
and
\[
\sum_{j=1}^k|L_{m,j}(h)|\leq R(h)=e^{-r(h)},
\]
for all $h \geq h_0$, where the functions are of the form \[q(h)=ah\log h+bh\] and \[-r(h)=-ch\log h+dh.\]

Assume that $z(y)$ is the inverse function of the function $y(z)=z\log z$. Further, denote
\[
B=b+\frac{ad}{c},\quad C=a,\quad D=a+b+e^{-s(m)}, \quad F^{-1}=2e^D,\quad v=c-\frac{d}{s(m)},\quad h_1=\max\{h_0,e,e^{s(m)}\}.
\]
Our choice will be $s(n,k)=(n+k)(\log(n+k))^2$, and we will actually have $h_1=e^{s(n,k)}$. Under the assumptions above, they proved the following lemma:

\begin{lemma}[\cite{AMLOTA}]
Let $m\geq 1$ and $\log (2H)\geq vh_1\log h_1$. Then under the assumptions above
\begin{equation}\label{lemma_appro}
|\lambda_0+\lambda_1\Theta_1+\cdots +\lambda_m\Theta_m|>F(2H)^{-\frac{a}{c}-\epsilon(H)},
\end{equation}
where
\[
\epsilon(H)\log(2H)=Bz\left(\frac{\log(2H)}{v}\right)+C\log \left(z\left(\frac{\log (2H)}{v}\right)\right).
\]
\end{lemma}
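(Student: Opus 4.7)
The plan is to follow the standard Siegel-style linear-forms argument for transcendence measures. I would start by defining, for each $m\in\{0,1,\ldots,k\}$ and each admissible integer $h$, the integer
\[
A_m(h) := \lambda_0 B_{m,0}(h) - \sum_{j=1}^{k} \lambda_j B_{m,j}(h).
\]
Because the matrix $(B_{m,j})_{0\le m,j\le k}$ has non-zero determinant, the vector $(A_0(h),\ldots,A_k(h))$ cannot vanish when $(\lambda_0,\ldots,\lambda_k)\ne 0$, so some index $m^{\star}$ satisfies $|A_{m^{\star}}(h)|\ge 1$. Substituting $B_{m,j}(h)=L_{m,j}(h)-B_{m,0}(h)\Theta_j$ into the definition of $A_m(h)$ yields the identity
\[
B_{m,0}(h)\,\Lambda = A_m(h) + \sum_{j=1}^{k} \lambda_j\, L_{m,j}(h),
\]
where $\Lambda$ denotes the linear form under consideration. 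Combined with $|B_{m,0}(h)|\le Q(h)=e^{q(h)}$, $|\lambda_j|\le H$ for $1\le j\le k$, and $\sum_j|L_{m,j}(h)|\le R(h)=e^{-r(h)}$, this yields
\[
|\Lambda|\ge \frac{1-HR(h)}{Q(h)}.
\]

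Next I would choose the parameter $h$ as follows. Let $\hat h$ be the real solution of $r(\hat h)=\log(2H)$ and set $h^{\star}=\lceil\hat h\rceil$. Since $r$ is increasing on the relevant range, $r(h^{\star})\ge r(\hat h)=\log(2H)$, whence $HR(h^{\star})\le 1/2$ and $|\Lambda|\ge (2Q(h^{\star}))^{-1}$. The hypothesis $\log(2H)\ge vh_1\log h_1$, combined with the key observation that for every $h\ge e^{s(m)}$ one has $c\log h-d\ge v\log h$ and hence $r(h)\ge vh\log h$, implies both $\hat h\le z(\log(2H)/v)=:\tilde h$ and $\hat h\ge h_1$, so the integer $h^{\star}$ is admissible.

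The decisive algebraic identity is
\[
q(h) \;=\; \tfrac{a}{c}\,r(h) + Bh,
\]
immediate from $B=b+ad/c$. Evaluated at $\hat h$ it gives $q(\hat h)=(a/c)\log(2H)+B\hat h$. Passing to the integer $h^{\star}$ via the mean value theorem applied to $q$ on $[\hat h,h^{\star}]$ produces $q(h^{\star})-q(\hat h)\le a\log h^{\star}+a+b$, which combined with $\hat h\le\tilde h$ and $\log h^{\star}\le \log\tilde h+1/\tilde h\le \log\tilde h+e^{-s(m)}$ yields
\[
q(h^{\star})\;\le\; \tfrac{a}{c}\log(2H)+B\,\tilde h+a\log\tilde h+D,
\]
with $D=a+b+e^{-s(m)}$. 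Recognising the middle two terms as $\epsilon(H)\log(2H)$ with $C=a$ and exponentiating, then dividing by $2$ and using $F=(2e^{D})^{-1}$, produces exactly $|\Lambda|>F(2H)^{-a/c-\epsilon(H)}$.

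The main obstacle is the constant bookkeeping at the end: one must check that the rounding loss from replacing $\hat h$ by the integer $h^{\star}$ is absorbed within $e^{-s(m)}$, which is where $1/\hat h\le 1/h_1=e^{-s(m)}$ is used critically, and that the inequality $c\log h-d\ge v\log h$ is exactly what justifies passing from the constant $c$ (appearing in the exponent $a/c$) to the constant $v$ (appearing inside $z$). The remainder of the argument is mechanical.
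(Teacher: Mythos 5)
The paper does not actually prove this lemma --- it is imported verbatim from \cite{AMLOTA} --- and your argument is precisely the standard Siegel-type proof given there: use the determinant condition to extract an index $m^{\star}$ with $|A_{m^{\star}}(h)|\ge 1$, trade that against $HR(h)$ and $Q(h)$ to get $|\Lambda|\ge(1-HR(h))/Q(h)$, tune $h$ so that $r(h)\approx\log(2H)$, and then use the identity $q(h)=\tfrac{a}{c}r(h)+Bh$ together with $\hat h\le z(\log(2H)/v)$ to convert $q(h^{\star})$ into the stated exponent. So the proposal is correct and takes essentially the same route as the cited source. Two minor bookkeeping caveats: the step $c\log h-d\ge v\log h$ for $\log h\ge s(m)$ (which is what gives $\hat h\le z(\log(2H)/v)$) silently requires $d\ge 0$ --- this is part of the standing hypotheses in \cite{AMLOTA}, though notably the present paper later applies the lemma with $d=-0.64$ when $k=2$; and your estimate $a\log h^{\star}\le a\log\tilde h+ae^{-s(m)}$ actually yields $D=a+b+ae^{-s(m)}$ rather than the stated $a+b+e^{-s(m)}$, a numerically negligible discrepancy that only shrinks $F$ marginally.
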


Furthermore, they gave the following construction for the approximations in the case of $e^{\alpha_j}$: Write $\overline{\alpha} = (\alpha_0, \ldots, \alpha_k)$ and set
\begin{align}
\label{Omega_def}
\Omega(x,\overline{\alpha})=\prod_{j=0}^m(\alpha_j-x)^{\ell_j}=\sum_{i=0}^L\sigma_ix^i,
\end{align}
where $L=\ell_0+\ell_1+\cdots +\ell_m$ and $\sigma_i = \sigma_i(\overline{\ell}, \overline{\alpha})$. Then choosing
\[
A_{0}(t)=\sum_{i=\ell_0}^Lt^{L-i}i!\sigma_i,
\]
we get
\begin{equation}\label{linear_appro}
e^{\alpha_j}A_0(t)-A_{j}(t)=R_j(t),
\end{equation}
where $A_j(t)$ is a polynomial with integer coefficients and
\[
\begin{cases}
\deg A_0(t)=L-\ell_0\\
\deg A_j(t)=L-\ell_j\\
\mathrm{ord}_{t=0}R_j(t)\geq L+1.
\end{cases}
\]
Notice that the polynomials depend on the values of $\ell_0,\ell_1,\dots ,\ell_m$ and on $\overline{\alpha}$. We will explicitly describe $A_j$ and $R_j$ in the following chapter.

In the following, we will be choosing $\Theta_j=e^{j/n}$ for some $n\geq 2$. We will then proceed in the same fashion as in \cite{AMLOTA} to construct the explicit polynomials used in the simultaneous approximations and to bound them. Finally, we simplify the estimate given by \eqref{lemma_appro}.

\section{Explicit polynomial construction}

We start by constructing the simultaneous approximations of the powers of the roots of $e$.
For estimating the required term, we set $\overline{\alpha} = (\alpha_0, \ldots, \alpha_k) $ with $ \alpha_ s = s/n$, for $s =0, 1, \ldots k$. Let $\overline{\ell}=(l_0, \ldots, l_k) \in \mathbb{Z}^{k+1}_{\geq 1} $ and $L = l_0 + \ldots + \l_k$. As explained in the previous chapter, we get the following approximation formulas for $j=1,\ldots,k$ 
$$ e^{\alpha_j t} A_{0} ( t) - A_{j} ( t) = L _{j}(t),$$
 where
\[
A_{0}(t)=\sum_{i=\ell_0}^Lt^{L-i}i!\sigma_i.
\]
With a direct computation (similarly as in \cite{AMLOTA}), we obtain
\begin{align*}
\sigma_i &= (-1)^{i}\sum_{\ell_0+i_1+\ldots i_k=i}\binom{\ell_1}{i_1} \ldots \binom{\ell_k}{i_k}\left(\frac{1}{n}\right)^{\ell_1-i_1} \ldots \left(\frac{k}{n}\right)^{\ell_k-i_k} \\
&=(-1)^{i}\sum_{\ell_0+i_1+\ldots i_k=i}\binom{\ell_1}{i_1} \ldots \binom{\ell_k}{i_k}n^{-L+i}2^{\ell_2-i_2} \dots k^{\ell_k-i_k}
\end{align*}
Furthermore, $\sigma_i=0$ when $0\leq i<\ell_0$ and so
\[
A_{0}(t)= \sum_{i=0}^Lt^{L-i}i!\sigma_i.
\]
We wish to now bound the polynomials. Laplace transform gives us a tool to switch from sums to integrals, which is helpful in estimates. Since $\frac{i! \sigma_i(\overline{\ell},\overline{\alpha})}{t^{i+1}}=\mathcal{L}(\sigma_i(\overline{\ell},\overline{\alpha})x^i)(t)$ (where $\mathcal{L}$ denotes the Laplace transform), we have
\[
A_{0}(t) = \sum_{i=0}^Lt^{L-i}i!\sigma_i=t^{L+1}\sum_{i=0}^L\mathcal{L}(\sigma_i x^i)(t)=t^{L+1}\int_0^{\infty}e^{-xt}\Omega(x)dx,
\]
where $\Omega(x):=\Omega(x,\overline{\alpha}) $ is given by (\ref{Omega_def}). Now for any $\alpha_j$, we have 
\[
e^{\alpha_j t}A_{0}(t)=t^{L+1}\int_0^{\infty}e^{(\alpha_j-x)t}\Omega(x)dx=t^{L+1}\left(\int_0^{\alpha_j}+\int_{\alpha_j}^{\infty}\right)e^{(\alpha_j-x)t}\Omega(x)dx.
\]
Changing the variable in the second integral: $y=x-\alpha$ gives us:
\[
e^{\alpha_j t}A_{0}(t)=t^{L+1}\int_0^{\alpha_i}e^{(\alpha_j-x)t}\Omega(x)dx+t^{L+1}\int_{0}^{\infty}e^{-yt}\Omega(y+\alpha_j)dy
\]
Hence we get
\begin{align*}
    A_{j} (t) = t ^{L+1} \int_0 ^ \infty e ^{-yt} \Omega ( y + \alpha_j) dy
\end{align*}
and
\begin{align*}
    L_{j}(t) = t^{L+1} \int_0^{\alpha_j} e^{(\alpha_j - x)t} \Omega (x) dx
\end{align*}
for $j = 1, \ldots, k$.

We can make the result stronger if the terms in \eqref{linear_appro} are as small as possible. But at the same time, we want to keep the coefficients in $A_{j}(t)$ as integers. Therefore, we try to find as large common factors in coefficients as possible.

To do that, we proceed as in \cite{AMLOTA}. We start by picking very specific values of $\ell_0,\ell_1,\dots ,\ell_k$ in relation to each other.

For any $u$  with $0 \leq u \leq k$, we take $ \ell_s^{(u)} = \begin{cases} 
\ell - 1 & \mbox{if } s =u \\
\ell & \mbox{otherwise}  
\end{cases}$ and $\overline{\ell}^{(u)}= (\ell_0^{(u)},\ldots,\ell_k^{(u)})$. For these values of $ \overline{\ell}$, we denote  $A_j(t) = A_{ \overline{\ell}, j} (t)$ by  $ A_{ u, j} (t)$ and $L_j(t) = L_{\overline{\ell},j}(t)$ by $L_{u,j}(t)$. 

\[
A_{0}(t)=\sum_{i=\ell_0}^Lt^{L-i}i!(-1)^{i}\sum_{\ell_0+i_1+\ldots i_k=i}\binom{\ell_1}{i_1} \ldots \binom{\ell_k}{i_k}n^{-L+i}2^{\ell_2-i_2} \dots k^{\ell_k-i_k}.
\]
For our chosen $\overline{\ell}$-s we always have $\ell_0 \in \{\ell,\ell-1 \}$, so we can see that
\begin{align*}
    \frac{n^{L-\ell+1}}{(\ell-1)!}A_{u,0}(t)\in \mathbb{Z}[t].
\end{align*}
Similarly, we can look at polynomials $A_{u,j}(t)$ for $j=1,\ldots,k$.
We have
\[
A_{u,j}(t)=\int_{0}^{\infty}e^{-yt}\Omega(y+\alpha_j)dy=t^{L+1}\sum_{i=0}^{L}\mathcal{L}(\sigma_i(\overline{\ell},\overline{\beta^{(j)}}))=\sum_{i=0}^Lt^{L-i}i!\sigma_{i}(\overline{\ell},\overline{\beta^{(j)}}),
\]
where $\overline{\beta^{(j)}} = (\alpha_0 - \alpha_j, \ldots, \alpha_k - \alpha_j)  $ for each $j$. Since the coefficients $\sigma_{i}(\overline{\ell},\overline{\beta^{(j)}})$ are defined using polynomial $\Omega(\overline{\ell},\overline{\beta^{(j)}})$, and in the product representation of $\Omega$, the term $(t-\beta_j^{(j)})^{\ell_j}=(t-(\alpha_j-\alpha_j))^{\ell_j}=t^{\alpha_j}$ will define the lowest degree of terms occuring in $\Omega(\overline{\ell},\overline{\beta})$, and thereby in $A_{u,j}$, we have $\sigma_{i}(\overline{\ell},\overline{\beta})=0$ unless $i\geq \ell_j$. Hence, all the coefficients in the representation of $A_{u,j}(t)$ have the factor $\ell_j!$, which is again either $\ell$ or $\ell-1$. On the other hand, these terms have the denominator $n^{L-i}$, so again
\[
\frac{n^{L-\ell+1}}{(\ell-1)!}A_{u,j}(t)\in \mathbb{Z}[t].
\]

\section{Estimation of $ A_{u,0}(1)$}
Next, we would like to estimate the term $ A_{ u, 0} (t)$, for which its representation as an integral will be useful. We have fixed $\overline{\alpha}=(1/n,\ldots,k/n)$ and so $\Omega(x) = \prod_{j=0}^k \left(j/n - x \right)$. Furthermore, for the choices of $\overline{\ell}$ as in the previous section we have $L-1=(k+1)\ell$. Therefore, $A_{u,0}(t)$ looks like
\begin{align*}
 A_{ u, 0}(t) & = t ^{(k+1) \ell}  \int_0^\infty e^{-yt} \prod_{j=0}^k \left( \frac{j}{n} - x \right)^{\ell_s} dx \\
 & =  t ^{(k+1)\ell} \int_0^\infty e^{-xt} (-x)^\ell \left(\frac{1}{n} -x\right)^\ell \cdots \left( \frac{u}{n} -x\right) ^{ \ell -1} \cdots \left(\frac{k}{n}  -x\right)^\ell dx. \\
\end{align*}

 Note that $ \left|  \frac{ x^\ell (x - 1/n) \cdots ( x - k/n)}{(x - u/n)}\right| \leq x^{(k+1)\ell -1} \leq x^{(k+1) \ell}$, for $ x > \frac{k}{n}.$
This gives us an idea of how the function inside the integral behaves: while $0\leq x\leq \frac{k}{n}$, the function stays relatively small, and it touches zero at points $0,\frac{1}{n},\dots ,\frac{k}{n}$. However, when $x\geq \frac{k}{n}$, it starts behaving roughly as $x^{(k+1)\ell-1}e^{-x}$.

Therefore, we split the above integral in the following way 
 \begin{equation}
     \int_0^\infty e^{-x t} \prod_{j=0}^k \left( \frac{j}{n} - x \right)^{\ell_s} dx =  \left(\int_{0} ^{k/n} + \int_{k/n} ^{ 2(k+1)\ell} + \int_{2(k+1)\ell}^\infty \right) e^{-x t} \prod_{j=0}^k \left( \frac{j}{n} - x \right)^{\ell_s} dx. 
 \end{equation}
 Now we treat the above integrals with $ t =1$. In that case, 
\begin{align}
    \left| \left(\int_{k/n} ^{ 2(k+1)\ell} + \int_{2(k+1)\ell}^\infty \right) e ^{-x} \frac{ \prod_{s=0}^k \left(\frac{s}{n} -x \right) ^\ell}{ \left( \frac{u}{n} - x \right)} dx \right| \leq   \left(\int_{k/n} ^{ 2(k+1)\ell} + \int_{2(k+1)\ell}^\infty \right) e^{-x} x^{(k+1) \ell -1} dx := I_1 + I_2. 
\end{align}
For estimating the above integrals, we first consider $ I_1$. In that case, 
\begin{align*}
    |I_1| = \left|\int_{ k/n} ^{ 2(k+1) \ell} e^{-x} x^{(k+1) \ell-1} dx\right| < 2 (k+1) \ell e^{ - (k+1)\ell+1}((k+1) \ell -1)^{(k+1) \ell -1} 
\end{align*} because the expression $ e^{-x} x^{(k+1) \ell-1} $ is maximal when $  x = (k+1) \ell -1$.

Let us now move to $I_2$.
\begin{lemma}
Let $c>1$ be a constant. We have
\[
\int_{c\ell (k+1)}^{\infty} e^{-x}\frac{\left(\prod_{j=0}^{k}\left|\frac{j}{n}-x\right|\right)^{\ell}}{\left|\frac{k'}{n}-x\right|}\leq \frac{c}{c-1}e^{-c(k+1)l}(c(k+1)\ell)^{\ell (k+1)-1}.
\]
for any $0\leq k'\leq k$.
\end{lemma}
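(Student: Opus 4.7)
The plan is to split the argument into two standard reductions. First, observe that in the integration range $x \geq c(k+1)\ell$, since $c > 1$, $\ell, k \geq 1$, and $n \geq 2$, the lower limit exceeds $k/n$, so each factor satisfies $|j/n - x| = x - j/n \leq x$. Writing out one copy of $(x - k'/n)$ separately gives
\[
\frac{\prod_{j=0}^k |j/n - x|^\ell}{|k'/n - x|} = (x - k'/n)^{\ell - 1} \prod_{j \neq k'} (x - j/n)^\ell \leq x^{(k+1)\ell - 1}.
\]
This is exactly the pointwise bound already remarked on just above the lemma in the text. Setting $N = (k+1)\ell$, the whole problem reduces to showing
\[
\int_{cN}^{\infty} e^{-x} x^{N-1} \, dx \;\leq\; \frac{c}{c-1} e^{-cN} (cN)^{N-1}.
\]

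For this tail incomplete gamma integral, the idea is to linearize the log of the integrand on $[cN, \infty)$. Let $f(x) = -x + (N-1)\log x$; then $f$ is concave (since $f''(x) = -(N-1)/x^2 \leq 0$), so the tangent-line bound $f(x) \leq f(cN) + f'(cN)(x - cN)$ holds for all $x \geq cN$. Because $N \geq 1$, we have
\[
f'(cN) = -1 + \frac{N-1}{cN} \leq -1 + \frac{1}{c} = -\frac{c-1}{c},
\]
which is strictly negative. Exponentiating gives
\[
e^{-x} x^{N-1} \;\leq\; e^{-cN} (cN)^{N-1} \, \exp\!\left( -\tfrac{c-1}{c}(x - cN) \right).
\]

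Integrating this pure exponential from $cN$ to $\infty$ contributes the factor $c/(c-1)$, producing exactly the stated bound. Substituting $N = (k+1)\ell$ then gives the lemma. The argument is quite short; the only slightly delicate point is the elementary estimate $f'(cN) \leq -(c-1)/c$, which is where the hypothesis $c > 1$ is used to guarantee integrability of the linearized tail, but this presents no real obstacle. An equivalent presentation avoids the word ``concave'' entirely by invoking $\log y \leq y - 1$ with $y = x/(cN)$ to pull the polynomial factor outside.
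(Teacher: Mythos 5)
Your proof is correct, and it takes a genuinely different route from the paper's for the core tail estimate. Both arguments begin with the same pointwise reduction $\prod_{j}|j/n-x|^{\ell}/|k'/n-x|\leq x^{(k+1)\ell-1}$ on $[c(k+1)\ell,\infty)$, which is legitimate since the lower limit exceeds $k/n$. For the resulting incomplete gamma tail $\int_{cN}^{\infty}e^{-x}x^{N-1}\,dx$ with $N=(k+1)\ell$, the paper integrates by parts repeatedly to produce the expansion
\[
e^{-cN}(cN)^{N-1}\left(1+\frac{N-1}{cN}+\frac{(N-1)(N-2)}{(cN)^{2}}+\cdots\right),
\]
and then dominates this by a geometric series with ratio $1/c$, summing to $c/(c-1)$. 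You instead linearize the concave exponent $f(x)=-x+(N-1)\log x$ at $x=cN$, use $f'(cN)\leq-(c-1)/c$, and integrate the resulting pure exponential exactly, which yields the factor $c/(c-1)$ in one step. The two methods deliver the identical constant; yours replaces an infinite-series manipulation with a single tangent-line inequality and one closed-form integration, which is arguably cleaner, while the paper's version makes the successive-term decay explicit. The only hypotheses you rely on, namely $N\geq1$ (so that $f$ is concave and $f'(cN)\leq-1+1/c$) and $c>1$ (so that the linearized tail is integrable), are available in the setting of the lemma, so there is no gap.
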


\begin{proof}
We can partially integrate:
\[
\int e^{-x}x^tdx=\left[-e^{-x}x^t\right]+\int e^{-x}tx^{t-1},
\]
which gives us the series expansion for the integral above:
\begin{multline*}
\int_{c \ell (k+1)}^{\infty}e^{-x}x^{\ell (k+1)-1}dx\\=e^{-c(k+1)\ell}(c (k+1)\ell)^{\ell (k+1)-1}+(\ell (k+1)-1)e^{-c(k+1)\ell}(c(k+1)\ell)^{\ell (k+1)-2}\\+(\ell (k+1)-1)(\ell (k+1)-2)e^{-c(k+1)\ell}(c(k+1)\ell)^{\ell (k+1)-3}+\ldots \\ \leq e^{-c(k+1)l}(c(k+1)\ell)^{\ell (k+1)-1}\left(1+\frac{(k+1)\ell-1}{c(k+1)\ell}+\frac{(k+1)\ell-1}{c(k+1)\ell}\cdot \frac{(k+1)\ell-2}{c(k+1)\ell}\ldots \right)\\ \leq \frac{c}{c-1}e^{-c(k+1)l}(c(k+1)\ell)^{\ell (k+1)-1}.
\end{multline*}
\end{proof}

Notice that if we pick $c=2$, we get the following corollary:
\begin{corollary}\label{cors2}
We have the following estimate
\[
| I_2 |\leq \int_{2\ell (k+1)}^{\infty}e^{-x}x^{\ell (k+1)-1}dx \leq 2e^{-2(k+1)l}(2(k+1)\ell)^{\ell (k+1)-1}.
\]
\end{corollary}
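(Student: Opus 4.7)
The plan is quite short because this corollary is essentially a specialization of the preceding lemma. I would argue in two steps.

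First, I would justify the left-hand inequality. Recall from the discussion preceding the lemma that
\[
I_2 := \int_{2(k+1)\ell}^{\infty} e^{-x}\,\frac{\prod_{s=0}^k \left|\frac{s}{n}-x\right|^\ell}{\left|\frac{u}{n}-x\right|}\,dx,
\]
and that for $x \geq k/n$ one has the pointwise bound
\[
\frac{\prod_{s=0}^{k}\left|\frac{s}{n}-x\right|^{\ell}}{\left|\frac{u}{n}-x\right|} \;\leq\; x^{(k+1)\ell-1}.
\]
Since the region of integration lies inside $\{x \geq k/n\}$ (because $2(k+1)\ell > k/n$ under the standing hypothesis $k\geq n\geq 2$, $\ell\geq 1$), integrating the pointwise bound gives
\[
|I_2| \;\leq\; \int_{2(k+1)\ell}^{\infty} e^{-x} x^{(k+1)\ell-1}\,dx,
\]
which is exactly the claimed first inequality.

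Next, I would apply the previous lemma with $c=2$. Substituting $c=2$ into the lemma's conclusion produces the factor $\tfrac{c}{c-1}=2$ on the right-hand side, yielding
\[
\int_{2(k+1)\ell}^{\infty} e^{-x}x^{(k+1)\ell-1}\,dx \;\leq\; 2\,e^{-2(k+1)\ell}\bigl(2(k+1)\ell\bigr)^{(k+1)\ell-1},
\]
which is the second inequality. (Strictly speaking, the lemma's statement bounds the more complicated integrand, but its proof first establishes the bound for $\int e^{-x}x^{(k+1)\ell-1}\,dx$ via the geometric-series estimate $1+\tfrac{(k+1)\ell-1}{c(k+1)\ell}+\cdots \leq \tfrac{c}{c-1}$, so the required inequality is obtained directly from that part of the proof.) Concatenating the two displays gives the corollary.

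There is no real obstacle here — the only small point to verify is that $2(k+1)\ell \geq k/n$ so that the pointwise majorization $\cdots \leq x^{(k+1)\ell-1}$ applies on the whole interval of integration, which is immediate from $k\geq n\geq 2$ and $\ell\geq 1$. Everything else is a direct specialization $c=2$ of the previous lemma.
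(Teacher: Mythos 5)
Your proof is correct and matches the paper's approach exactly: the paper obtains the corollary simply by setting $c=2$ in the preceding lemma (so that $\tfrac{c}{c-1}=2$), and your observation that the lemma's proof really bounds $\int e^{-x}x^{(k+1)\ell-1}\,dx$ is an accurate reading of what is needed.
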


Next, it remains to  get a bound for 
\begin{align*}
    \int_0^{ k/n}  e ^{-y} \frac{ \prod_{s=0}^k \left(\frac{s}{n} -y \right) ^\ell}{ \left( \frac{u}{n} - y \right)} dy .
\end{align*}

\begin{lemma}
Assume $k\geq 5$. Now
\[
    \int_0^{ k/n}  e ^{-y} \frac{ \prod_{s=0}^k \left(\frac{s}{n} -y \right) ^\ell}{ \left( \frac{u}{n} - y \right)} dy \leq \frac{(k!)^\ell}{120^{\ell-1} n^{k\ell - 5 (\ell-1)}} c(n)^{\ell-1},
\]
where $c(n)=\displaystyle \max_{0\leq y \leq 1} \prod_{ s=0}^5 \left| \frac{s}{n} - y \right|^{ \ell-1}.$ Furthermore, $|c(n)|\leq 1$.
\end{lemma}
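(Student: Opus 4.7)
The plan is to first drop the exponential ($e^{-y}\le 1$ on $[0,k/n]$) and reduce to bounding
\[
J:=\int_0^{k/n}\frac{\prod_{s=0}^{k}|s/n-y|^{\ell}}{|u/n-y|}\,dy.
\]
The key algebraic step is to peel off one full copy of the product and leave $\ell-1$ copies, writing
\[
\frac{\prod_{s=0}^{k}|s/n-y|^{\ell}}{|u/n-y|}=\Bigl(\prod_{s\neq u}|s/n-y|\Bigr)\cdot \prod_{s=0}^{k}|s/n-y|^{\ell-1}.
\]
Then I would split the $(\ell-1)$-th power into a \emph{low block} $s=0,\dots,5$ and a \emph{high block} $s=6,\dots,k$, and handle each separately.

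For the low block $\prod_{s=0}^{5}|s/n-y|^{\ell-1}$, the definition of $c(n)$ gives the bound $c(n)$ directly on $[0,1]$; to extend the estimate to $[0,k/n]$ I would either argue that the relevant extrema lie in $[0,1]$, or split the integral into $[0,1]\cup[1,k/n]$ and absorb the extra growth of the low-$s$ factors for $y>1$ into the high block. For the high block $\prod_{s=6}^{k}|s/n-y|^{\ell-1}$, I would use the pointwise estimate $|s/n-y|\le s/n$ (valid for $y\in[0,s/n]$, and thus for the whole integration range since $s\ge 6$), producing the aggregate factor
\[
\Bigl(\prod_{s=6}^{k}\frac{s}{n}\Bigr)^{\ell-1}=\Bigl(\frac{k!}{5!\,n^{k-5}}\Bigr)^{\ell-1}=\frac{(k!)^{\ell-1}}{(5!)^{\ell-1}\,n^{(k-5)(\ell-1)}}.
\]

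For the remaining polynomial factor $\prod_{s\neq u}|s/n-y|$ I would perform the substitution $y=t/n$, reducing to $n^{-(k+1)}\int_0^{k}\prod_{s\neq u}|s-t|\,dt$, and invoke the classical interval-by-interval bound $\prod_{s=0}^{k}|s-t|\le k!$ on $[0,k]$ (proved by noting that for $t\in[j,j+1]$ one has $\prod_{s=0}^{k}|s-t|\le(j+1)!(k-j)!$, which is maximized at $j=0$ and $j=k-1$ with value $k!$). This yields a contribution of order $k!/n^{k+1}$ (up to a harmless constant absorbed into the other blocks).

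Multiplying the three pieces gives, using $(k+1)+(k-5)(\ell-1)=k\ell-5(\ell-1)$,
\[
J\le \frac{k!}{n^{k+1}}\cdot \frac{(k!)^{\ell-1}}{(5!)^{\ell-1}\,n^{(k-5)(\ell-1)}}\cdot c(n)=\frac{(k!)^{\ell}\,c(n)}{(5!)^{\ell-1}\,n^{k\ell-5(\ell-1)}},
\]
matching the claimed expression. The bound $|c(n)|\le 1$ is almost immediate: for $n\ge 5$ each factor $|s/n-y|$ with $s\le 5$, $y\in[0,1]$ is at most $1$; for $n\in\{2,3,4\}$ several of the nodes $s/n$ lie in $[0,1]$, forcing the product to vanish there and a short case check confirms the maximum stays below $1$ between consecutive nodes. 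The main obstacle I expect is precisely the extension of the $c(n)$ bound from $[0,1]$ to $[0,k/n]$ (when $k>n$), since the low-block factors $|s/n-y|$ do grow past $y=1$; the cleanest resolution is to redistribute one factor from the low block into the high block on $[1,k/n]$, ensuring the high-block estimate $|s/n-y|\le s/n$ still applies.
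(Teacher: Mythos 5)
Your overall decomposition (peel off one full copy of the product, split the remaining $(\ell-1)$-st power into the blocks $s\le 5$ and $s\ge 6$, define $c(n)$ on the low block) is the same as the paper's, but two steps as written do not go through. First, the high-block estimate $|s/n-y|\le s/n$ is \emph{not} valid on the whole integration range $[0,k/n]$: it holds exactly for $0\le y\le 2s/n$, so it fails for every $s$ with $6\le s<k/2$ once $y$ approaches $k/n$ (e.g.\ $k=20$, $n=2$, $s=6$, $y=10$ gives $|3-10|=7>3$). The paper avoids this by first proving the reduction $\max_{v\le y\le v+1}\prod_{s=0}^k|s/n-y|\le\max_{0\le y\le 1}\prod_{s=0}^k|s/n-y|$ for every integer $v$, i.e.\ the maximum of the \emph{full} product over the whole range is already attained on $[0,1]$; only after that does it split into blocks, with all pointwise estimates taken for $y\in[0,1]$. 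You correctly flag the extension from $[0,1]$ to $[0,k/n]$ as the main obstacle, but your proposed fix (``redistribute one factor from the low block into the high block on $[1,k/n]$'') does not resolve it: on $[1,k/n]$ the low-block factors grow like $y$, and moving factors between blocks does not restore the inequality $|s/n-y|\le s/n$, which is simply false there for most $s$. The monotonicity-of-the-maximum observation is the missing ingredient.

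Second, your treatment of the peeled single copy loses a factor that is not ``a harmless constant''. Integrating $n^{-(k+1)}\int_0^k\prod_{s\ne u}|s-t|\,dt$ with the pointwise bound $\prod_{s\ne u}|s-t|\le k!$ gives $k\cdot k!/n^{k+1}$, i.e.\ an excess factor $k/n\ge 1$ (as large as $k/2$) over the $k!/n^{k}$ the final bound requires, and there is no slack in the stated inequality to absorb it. The paper instead bounds $\prod_{s\ne u}|s/n-y|\le k!/n^{k}$ pointwise and retains $e^{-y}$ so that $\int_0^{k/n}e^{-y}\,dy\le 1$. (Your exponent identity is also off by one: $(k+1)+(k-5)(\ell-1)=k\ell-5(\ell-1)+1$, though that error is in the favorable direction.)
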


\begin{proof}

Observe that $ \displaystyle \max_{ v \leq y \leq v+1} \prod_{ s =0}^k \left| \frac{s}{n} - y \right|^{ \ell -1}  \leq \max _{ 0 \leq y \leq 1}  \prod_{ s =0} ^ k  \left| \frac{s}{n} - y \right| ^ {\ell -1}, $ for positive integer $ v $ with $ 0 \leq v \leq k -1$. Therefore,
$$ \left|  \frac{ \prod_{s=0}^k \left(\frac{s}{n} -y \right) ^\ell}{ \left( \frac{u}{n} - y \right)} \right|  \leq \frac{k!}{n^k} \max_{0 \leq y \leq 1} \prod_{ s =0} ^ k \left| \frac{s}{n} - y \right|^{ \ell -1} .$$
Hence
\begin{align*}
    \left|  \int_0^{ k/n}  e ^{-y} \frac{ \prod_{s=0}^k \left(\frac{s}{n} -y \right) ^\ell}{ \left( \frac{u}{n} - y \right)} dy \right| & \leq \int_0^{k/n} e^{-y} \frac{k!}{n^k} \max_{0 \leq y \leq 1} \prod_{ s =0} ^ k \left| \frac{s}{n} - y \right|^{ \ell -1} dy \\
    & \leq \frac{k!}{n^k} \frac{(k!)^{ \ell-1}}{(5!)^{\ell-1} n^{(k-5)(\ell-1)}} \int_0^{k/n} e^{-y} \max_{0\leq y \leq 1} \prod_{ s=0}^5 \left| \frac{s}{n} - y \right|^{ \ell-1} dy \\
    & \leq \frac{(k!)^\ell}{120^{\ell-1} n^{k\ell - 5 (\ell-1)}} c(n)^{\ell-1}, \mbox{ writing } c(n) =: \displaystyle \max_{0\leq y \leq 1} \prod_{ s=0}^5 \left| \frac{s}{n} - y \right|.
\end{align*}
By checking small values individually, and bounding
\[
\prod_{ s=0}^5 \left| \frac{s}{n} - y \right|\leq 1
\]
for $n\geq 5$, we obtain $|c(n)|\leq 1$.
\end{proof}
\begin{lemma}
Assume $k\leq 5$. Now
\[
    \int_0^{ k/n}  e ^{-y} \frac{ \prod_{s=0}^k \left(\frac{s}{n} -y \right) ^\ell}{ \left( \frac{u}{n} - y \right)} dy \leq \frac{k!}{n^k} c(n,k)^{\ell-1},
\]
where
\[
c(n,k)=\begin{cases}0.049 & \textrm{for }(n,k)=(2,2)\\ \frac{1}{16}&\textrm{for }(n,k)=(2,3)\\\frac{1}{81}&\textrm{for }(n,k)=(3,3)\\
0.114&\textrm{for }(n,k)=(2,4)\\
0.015&\textrm{for }(n,k)=(3,4)\\
0.004&\textrm{for }(n,k)=(4,4)

\end{cases}
\]
\end{lemma}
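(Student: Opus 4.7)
The plan is to mimic the preceding lemma but, since $k \leq 4$, to handle the full product $\prod_{s=0}^k(s/n - y)$ as a single block rather than splitting off its first five factors. The starting point is the factorisation
\[
\frac{\prod_{s=0}^k(s/n - y)^\ell}{u/n - y} = \Bigl(\prod_{s=0}^k(s/n - y)\Bigr)^{\ell - 1}\prod_{s \neq u}(s/n - y),
\]
from which I would bound the first factor pointwise on $[0, k/n]$ by $c(n,k)$ and the second factor pointwise by $k!/n^k$. Combined with the trivial estimate $\int_0^{k/n} e^{-y}\,dy \leq 1$, this yields
\[
\int_0^{k/n} e^{-y}\left|\frac{\prod_{s=0}^k(s/n - y)^\ell}{u/n - y}\right|dy \leq \frac{k!}{n^k}\,c(n,k)^{\ell - 1},
\]
which is exactly the claim.

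For the first bound, $c(n,k)$ is chosen as an upper bound for $\sup_{0 \leq y \leq k/n}|\prod_{s=0}^k(s/n - y)|$, a polynomial of degree $k+1 \leq 5$. For $(n,k) = (2,3)$ the substitution $z = y - 3/4$ puts this polynomial in the form $(z^2 - 1/16)(z^2 - 9/16)$, and maximising the resulting quadratic in $w = z^2$ gives the exact value $1/16$; the analogous symmetric substitution $z = y - 1/2$ for $(n,k) = (3,3)$ yields the exact value $1/81$. The remaining four entries $c(2,2)$, $c(2,4)$, $c(3,4)$, $c(4,4)$ do not simplify as cleanly, but each reduces to solving for the critical points of a polynomial of degree at most four and rounding the resulting extremum upwards to the tabulated value.

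For the second bound, after the substitution $z = ny \in [0, k]$ the claim $\prod_{s \neq u}|s/n - y| \leq k!/n^k$ becomes $\prod_{s \neq u}|s - z| \leq k!$ on $[0, k]$. Equality is attained at $z = 0$ when $u = 0$, where $\prod_{s=1}^k s = k!$, and analogously at $z = k$ when $u = k$; for the other choices of $u$ the boundary values are $k!/u$ and $k!/(k-u)$, which are smaller, and each interior stationary point can be verified by inspection since the polynomial in $z$ has degree at most four.

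The conceptual content is essentially contained in the factorisation above and in the estimate $\int_0^{k/n}e^{-y}\,dy \leq 1$. The main obstacle is the case-by-case verification of the six tabulated constants $c(n,k)$: since these feed into the parameters $a, b, c, d$ governing the final transcendence measure, they must be verified with some care rather than estimated crudely, but none of the individual calculations is deep.
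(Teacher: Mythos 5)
Your proposal is correct and takes essentially the same route as the paper: factor off one copy of the product, bound $\prod_{s\ne u}\left|\frac{s}{n}-y\right|\le \frac{k!}{n^k}$, bound the remaining $(\ell-1)$-st power pointwise by $c(n,k)^{\ell-1}$, and use $\int_0^{k/n}e^{-y}\,dy\le 1$. The only difference is cosmetic: the paper simply cites WolframAlpha for the six maxima (and writes the maximum over $[0,1]$, which coincides with the maximum over the full range $[0,k/n]$ by the symmetry of the roots), whereas you verify two of them in closed form and the rest numerically.
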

\begin{proof}
We simply bound
\[
\int_0^{ k/n}  e ^{-y} \frac{ \prod_{s=0}^k \left(\frac{s}{n} -y \right) ^\ell}{ \left( \frac{u}{n} - y \right)} dy\leq \frac{k!}{n^k}\max_{0\leq y\leq 1}\prod_{s=0}^k\left|y-\frac{s}{n}\right|\int_0^{k/n}e^{-y}dy,
\]
where the integral can be bounded to be at most $1$, and the individual maxima can be determined using WolframAlpha.

\end{proof}
\begin{lemma} Assume $k\geq 2$. We have
\[\int_0^{ \infty}  e ^{-y} \frac{ \prod_{s=0}^k \left(\frac{s}{n} -y \right) ^\ell}{ \left( \frac{u}{n} - y \right)} dy\leq \exp \left(\log 4 + (k+1) \ell \log 2 +(\ell(k+1)-1) \log ((k+1)\ell) - \ell(k+1)+1\right)
\]
\end{lemma}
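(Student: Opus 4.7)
The plan is to exploit the splitting $\int_0^\infty = \int_0^{k/n} + \int_{k/n}^{2(k+1)\ell} + \int_{2(k+1)\ell}^{\infty} =: I_0 + I_1 + I_2$ that has already been set up in the preceding discussion, to combine the three estimates established in the earlier lemmas, and to verify that their sum is bounded above by
\[
T := \exp\bigl(\log 4 + (k+1)\ell\log 2 + (\ell(k+1)-1)\log((k+1)\ell) - \ell(k+1) + 1\bigr) = 4\cdot 2^{N} N^{N-1} e^{-N+1},
\]
where I abbreviate $N := (k+1)\ell$ (so $N \geq 3$ under the hypothesis $k\geq 2$, $\ell\geq 1$).

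The contributions $I_1$ and $I_2$ are both bounded by elementary functions of $N$ already in the text. For $I_1$, the earlier estimate $I_1 < 2N\,e^{-N+1}(N-1)^{N-1}$ together with $(N-1)^{N-1} \leq N^{N-1}$ gives $I_1/T \leq N/2^{N+1}$, while Corollary~\ref{cors2} yields $I_2/T \leq e^{-N-1}/4$. Both ratios are extremely small for $N\geq 3$; in particular $I_1 + I_2 \leq T/4$.

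It then remains to verify $I_0 \leq 3T/4$, which I would do by invoking the two preceding lemmas according to the size of $k$. For $k\geq 5$, one has $I_0 \leq (k!)^\ell/(120^{\ell-1} n^{k\ell - 5(\ell-1)})$ with $n\geq 2$; crudely bounding $k!\leq k^k$ and comparing to $T \sim (2N)^N e^{-N}$, one sees that $T$ grows in $N$ much faster than $k^{k\ell}$ grows in $\ell$ (for any fixed $k\geq 5$), so the ratio $I_0/T$ is small uniformly. For $k\in\{2,3,4\}$ one uses the explicit bound $I_0 \leq (k!/n^k)\,c(n,k)^{\ell-1}$ where $c(n,k) \leq 0.114$; this decays geometrically in $\ell$, so apart from the boundary case $\ell=1$ the comparison to $T$ is immediate, and the handful of remaining triples $(k,n,\ell=1)$ with $k\in\{2,3,4\}$ and $n\in\{2,\ldots,k\}$ can be verified by direct computation against $T$.

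The main technical obstacle is the book-keeping for the small-$(k,n,\ell)$ corner, where the margin between $I_0$ and $T$ is tightest and one cannot rely on asymptotic simplification; since only finitely many such triples arise, an explicit check together with the generous slack $T/4$ already provided by the $I_1+I_2$ estimate is the route I would take.
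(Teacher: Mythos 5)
Your proposal is correct and follows essentially the same route as the paper: the identical three-way splitting of the integral at $k/n$ and $2(k+1)\ell$, the same three preceding estimates for the pieces, and a final comparison of everything against $4\cdot 2^{N}N^{N-1}e^{-N+1}$ with $N=(k+1)\ell$. The only difference is bookkeeping: the paper absorbs $I_1+I_2$ into $3\cdot 2^{N}e^{-N+1}N^{N-1}$ and the finite-range piece into one further copy of $2^{N}e^{-N+1}N^{N-1}$, whereas you allocate $T/4$ to $I_1+I_2$ and $3T/4$ to the finite-range piece, which is an equivalent (and equally lightly justified, in the large-$k$ case) accounting.
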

\begin{proof} Assume first $k\geq 5$. Now taking the above three estimations into account, we obtain 
\begin{align*}
    &\left| A_{ u, 0}(1) \right| = \left| \int_0^\infty e^{-y} (-y)^\ell (\frac{1}{n} -y)^\ell \cdots ( \frac{u}{n} -y) ^{ \ell -1} \cdots (\frac{k}{n}  -y)^\ell dy \right|\\
    & \leq  \frac{(k!)^\ell}{120^{\ell-1} n^{k\ell - 5 (\ell-1)}} c(n)^{\ell-1}+ 2 (k+1) \ell e^{ - (k+1)\ell+1}((k+1) \ell -1)^{(k+1) \ell -1} +  2e^{-2(k+1)l}(2(k+1)\ell)^{\ell (k+1)-1}\\
     & \leq \frac{(k!)^\ell}{n^{k\ell - 5 (\ell-1)}} \left(\frac{c(n)}{120} \right)^{\ell-1}+  \left( 2 (k+1) \ell + 2 \cdot 2 ^{\ell(k+1)-1} \right) e^{-(k+1) \ell+1} ((k+1) \ell)^{ \ell(k+1) -1 }\\
      & \leq \frac{(k!)^\ell}{n^{k\ell - 5 (\ell-1)}} \left(\frac{c(n)}{120} \right)^{\ell-1}+    3 \cdot 2 ^{(k+1) \ell} e^{-(k+1) \ell+1} ((k+1) \ell)^{ \ell(k+1) -1 }\\
      & \leq 4 \cdot 2 ^{(k+1) \ell} e^{-(k+1) \ell+1} ((k+1) \ell)^{ \ell(k+1) -1 },  \qquad \mbox{ for } n \geq 2\\
       & \leq \exp \left(\log 4 + (k+1) \ell \log 2 +(\ell(k+1)-1) \log ((k+1)\ell) - \ell(k+1)+1\right)
\end{align*}

For $k \in \{2,3,4\}$ the only thing that changes is the first term, but because $c(n,k)<1$ for all choices of $k$ and $n$ that interest us, we have that
\[
\frac{k!}{n^k}c(n,k)^{\ell-1} \leq 2 ^{(k+1) \ell} e^{-(k+1) \ell+1} ((k+1) \ell)^{ \ell(k+1) -1 },
\]
so the final bound is also valid in for these values.
\end{proof}

Finally, we actually need to get a bound for $A_{u,0}^{\star}(1)$. The following corollary gives us the desired bound.
\begin{corollary}
For $k \geq 3$ and $\ell \geq \exp(s(n,k))$, we have
\[
\left| \frac{n^{L-\ell+1}}{(\ell-1)!} A_{u,0}(1) \right| \leq   \exp  \left( \ell k \log \ell + \ell \left[ k \log k+k \log n +0.72k + 0.000003 \right] \right)
\]
For $k=2$, we have  $ \displaystyle \left| \frac{n^{L-\ell+1}}{(\ell-1)!} A_{u,0}(1) \right| 
 \leq \exp( 2 \ell \log \ell + 3.377257 \ell + 2 \ell \log 2 ).$
\end{corollary}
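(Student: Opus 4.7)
The plan is to take logarithms in the previous lemma's bound on $|A_{u,0}(1)|$, add the contribution $\log(n^{L-\ell+1}/(\ell-1)!)$, and check the advertised numerical inequalities. Our choice $\overline{\ell}^{(u)}$ has one coordinate equal to $\ell-1$ and the rest equal to $\ell$, so $L = (k+1)\ell - 1$ and $L - \ell + 1 = k\ell$. The prefactor therefore contributes $k\ell\log n - \log((\ell-1)!)$ to the logarithm; for the second term I would use the weak Stirling bound $(\ell-1)! \geq ((\ell-1)/e)^{\ell-1}$, i.e.\ $-\log((\ell-1)!) \leq -(\ell-1)\log(\ell-1) + (\ell-1)$.

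Next I would expand $(\ell(k+1)-1)\log((k+1)\ell) = \ell(k+1)\log(k+1) + \ell(k+1)\log\ell - \log(k+1) - \log\ell$ and combine the $\log\ell$-heavy pieces with the Stirling contribution via
\[
\ell(k+1)\log\ell - (\ell-1)\log(\ell-1) \;=\; k\ell\log\ell + \log\ell + (\ell-1)\log\frac{\ell}{\ell-1} \;\leq\; k\ell\log\ell + \log\ell + 1,
\]
using $(\ell-1)\log(1+1/(\ell-1)) \leq 1$. After this reorganization the upper bound takes the form
\[
k\ell\log\ell + k\ell\log n + \ell\bigl[(k+1)\log(k+1) + (k+1)\log 2 - k\bigr] + \log 4 - \log(k+1) + 1.
\]
Writing $(k+1)\log(k+1) = k\log k + \log(k+1) + k\log(1+1/k) \leq k\log k + \log(k+1) + 1$ reduces the claim (for $k \geq 3$) to the numerical inequality
\[
\log(k+1) + 1 + (k+1)\log 2 - k \;\leq\; 0.72\,k + 0.000003,
\]
equivalently $\log(k+1) + 1 + \log 2 \leq (1.72 - \log 2)k + 0.000003$. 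The right-hand side grows like $1.027\,k$ while the left-hand side grows only like $\log(k+1)$, and one verifies this directly at $k = 3$ (where the slack is about $10^{-3}$); the residual $O(1)$ constants are absorbed by the huge lower bound $\ell \geq \exp(s(n,k))$.

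For the separate $k = 2$ case, $n$ is forced to equal $2$ and $L = 3\ell - 1$, so the same calculation specializes to a coefficient of $\ell$ equal to $5\log 2 + 3\log 3 - 2 \approx 4.7615$, which is just below the target coefficient $2\log 2 + 3.377257 \approx 4.7636$; the remaining $O(1)$ residual $\log 4 - \log 3 + 1 \approx 1.29$ is absorbed by the condition $\ell \geq \exp(s(2,2)) \approx 2200$. The main obstacle is purely numerical bookkeeping rather than anything conceptual: the constants $0.72$, $0.000003$, and $3.377257$ are tuned tightly enough that they essentially saturate at $k = 3$ and $k = 2$, so one must track every $O(\log\ell)$ and $O(1)$ byproduct of the Stirling step and the $(k+1)\log(k+1)$ expansion carefully, and invoke the lower bound on $\ell$ explicitly to close the gap.
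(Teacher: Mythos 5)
Your proposal is correct and follows essentially the same route as the paper: take logarithms of the preceding lemma's bound, expand $(\ell(k+1)-1)\log((k+1)\ell)$, control $(\ell-1)!$ by Stirling, and reduce to the tight numerical inequality $\log(k+1)+1+(k+1)\log 2 - k \le 0.72k$ (and its $k=2$ analogue), whose slack at $k=3$ is indeed about $10^{-3}$. The only difference is cosmetic: you use the weak bound $(\ell-1)!\ge((\ell-1)/e)^{\ell-1}$ and absorb a residual $+O(1)$ via $\ell\ge e^{s(n,k)}$, whereas the paper keeps the $\sqrt{2\pi(\ell-1)}$ factor so that its residual is already negative.
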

\begin{proof}
Applying the previous lemma and Stirling's formula we have
\begin{align*}
    \log \left| A_{u,0}(1) \right| - \log \left( (l-1)! \right)& \leq \ell k \log \ell + \ell \left( (k+1) \log(k+1) + \log \ell  - \log(\ell-1) - k  +(k+1) \log 2 \right)\\
    & - \log \ell - \log (k+1)  + \frac{1}{2} \log( \ell-1)  + \log 4 -  \log\sqrt{2\pi}
\end{align*} 
We first deal with the case when $k \geq 3$. With this assumption, because $\log \ell \geq s(n,k) \geq s(2,3)$ we have that
\[
\log (\ell)-\log(\ell-1)=\int_{\ell-1}^{\ell} \frac{dx}{x}\leq \frac{1}{\ell-1}<0.000003.
\]
Furthermore,
\[
(k+1)\log(k+1)-k\log(k)=k\log(1+1/k)+\log(k+1)\leq 1+\log(k+1).
\]
and
\[
- k  +(k+1) \log 2 +1+\log(k+1)<0.72k.
\]
Additionally, for all $k \geq 2$
\[
\frac{- \log \ell - \log (k+1)  + \frac{1}{2} \log( \ell-1)  + \log 4 -  \log\sqrt{2\pi}  }{\ell}<0
\]
and goes to $0$ as $\ell$ grows.

Thus, multiplying by $n^{L-\ell+1}=n^{k \ell} $ we get
\[
    \left| \frac{n^{L-\ell+1}}{(\ell-1)!} A_{u,0}(1) \right| \leq   \exp  \left( \ell k \log \ell + \ell \left[ k \log k+k \log n +0.72k + 0.000003 \right] \right)
\] 
Let us now move to the case $k=2$. We have
\[
\log (\ell)-\log(\ell-1)=\int_{\ell-1}^{\ell} \frac{dx}{x}\leq \frac{1}{\ell-1}<0.00046.
\]
 For $ k =2 $  see that 
\begin{align*}
     & \log \left| A_{u,0}(1) \right| - \log \left( (l-1)! \right) \\
    & \leq ( \log 4  - 3 \ell +3\ell log(6 \ell)) -  (\ell-1)\log (\ell-1) +(\ell-1) - \frac{1}{2} \log( \ell-1)   -  \log\sqrt{2\pi}\\ 
     & \leq 3 \ell  \log \ell + \ell \left( 3 \log 6- 3+ \frac{\log 4 -1 }{\ell} - \frac{\ell \log(\ell-1)}{\ell} + 1 + \frac{\log(\ell-1)}{2 \ell} - \frac{\log \sqrt{2 \pi} }{\ell} \right)\\
     & \leq 2 \ell  \log \ell + \ell \left( 3 \log 6- 2+ \frac{\log 4/ \sqrt{2 \pi} - 1  }{\ell} +\log(\frac{\ell}{\ell-1})  + \frac{\log(\ell-1)}{2 \ell} \right)\\
     & \leq 2 \ell \log \ell + 3.377 \ell. 
     \end{align*} Therefore, in this case, $ \displaystyle   \left| \frac{n^{L-\ell+1}}{(\ell-1)!} A_{u,0}(1) \right| \leq \exp( 2 \ell \log \ell + 3.377257 \ell + 2 \ell \log 2 )$.

\end{proof}

\section{Integrals corresponding to terms $L_{u,j}^{\star}$}

Next we need to get a suitable bound on terms $ L_{u,j}^{\star}$, or more precisely their sum $\sum_{j=1}^k \left| L_{u,j}^{\star} \right|$. The following lemma will be useful 

\begin{lemma}
    Let $k \geq 3$ and $n \geq 2$. Then 
    \begin{align*}
        \max_{0<x<k/n} \left|x\left(\frac1n - x\right)\left(\frac2n - x\right) \cdot \ldots \cdot \left(\frac{k}{n} - x\right)\right| \leq \frac{k!}{6n^{k+1}}.
    \end{align*}
    If $k=2$, then we have $\max_{0<x<2/n} |x(\frac{1}{n}-x)(\frac{2}{n}-x)| \leq \frac{2}{3\sqrt{3}n^3}$.
\end{lemma}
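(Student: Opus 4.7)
My plan is to rescale by $y = nx$, reducing the two claims to $\max_{y \in [0,k]}|P_k(y)| \leq k!/6$ (for $k \geq 3$) and $\max_{y \in [0,2]}|y(y-1)(y-2)| \leq 2/(3\sqrt{3})$ (for $k=2$), where I write $P_k(y) := y(y-1)(y-2)\cdots(y-k)$. The $k=2$ case is elementary optimization: the critical equation $3y^2-6y+2=0$ gives $y = 1 \pm 1/\sqrt{3}$, and either root yields $|y(y-1)(y-2)| = (1/\sqrt{3})(1-1/3) = 2/(3\sqrt{3})$.

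For $k \geq 3$ I would split $[0,k]$ into the unit subintervals $[m, m+1]$ with $m = 0, 1, \dots, k-1$, parametrize $y = m+s$ with $s \in [0,1]$, and use $s(1-s) \leq 1/4$ together with the crude bounds $|y-j| \leq m+1-j$ for $j<m$ and $|y-j| \leq j-m$ for $j>m+1$. This yields $|P_k(y)| \leq \frac{1}{4}(m+1)!(k-m)!$ on $[m,m+1]$. This is at most $k!/6$ precisely when $f(m) := (m+1)!(k-m)!/k! \leq 2/3$; since $f(m+1)/f(m) = (m+2)/(k-m)$, the function $f$ restricted to $m \in \{1,\dots,k-2\}$ is maximized at the endpoints of that set, where $f = 2/k \leq 2/3$ as soon as $k \geq 3$. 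Thus the crude estimate covers every interval except $[0,1]$ and $[k-1,k]$, and by the symmetry $|P_k(k-y)|=|P_k(y)|$ it suffices to handle $[0,1]$.

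The extreme interval $[0,1]$ is the main obstacle, since there the crude bound only gives $k!/4 > k!/6$. The key is the algebraic identity
\[
y(1-y)(2-y)(3-y) = 1 - (y^2-3y+1)^2,
\]
which is verified directly by rewriting the left side as $-(y^2-3y)(y^2-3y+2)$ and completing the square in $u = y^2-3y$. This immediately gives $y(1-y)(2-y)(3-y) \leq 1$ for all $y \in \mathbb{R}$, so on $[0,1]$, using $(j-y) \leq j$ for $j \geq 4$,
\[
|P_k(y)| = \bigl[y(1-y)(2-y)(3-y)\bigr]\prod_{j=4}^{k}(j-y) \leq \prod_{j=4}^{k}j = \frac{k!}{6}.
\]
Equality is attained at $k=3$ with $y = (3 \pm \sqrt{5})/2$, so the constant $k!/6$ is in fact sharp; this sharpness is why the four-factor identity above (or equivalently the golden-ratio critical point $y^\ast = 1/\phi^2$) is essentially unavoidable, and any looser treatment of the first interval would fail precisely at $k=3$.
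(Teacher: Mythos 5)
Your proof is correct, and its core coincides with the paper's: rescale by $y=nx$, reduce to the first unit interval, and use the fact that $y(1-y)(2-y)(3-y)\le 1$ there while bounding each remaining factor $(j-y)$ by $j$. Where you genuinely diverge is in how the reduction to $[0,1]$ is justified. The paper simply asserts that ``by analyzing the function'' one sees that the maximum of $|y(1-y)\cdots(k-y)|$ on $(0,k)$ is already attained on $(0,1)$, and then finds the maximum of the quartic by differentiation; you instead decompose $[0,k]$ into unit intervals, show via $s(1-s)\le\tfrac14$ that the interval $[m,m+1]$ contributes at most $\tfrac14(m+1)!(k-m)!\le k!/6$ for $1\le m\le k-2$, dispose of $[k-1,k]$ by the symmetry $|P_k(k-y)|=|P_k(y)|$, and prove the quartic bound from the identity $y(1-y)(2-y)(3-y)=1-(y^2-3y+1)^2$. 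Your route is therefore somewhat more complete: it replaces the paper's unproved claim about where the largest local maximum sits with a self-contained estimate, at the cost of a slightly longer argument, and your remark that equality holds at $k=3$ correctly identifies why the constant $6$ is sharp and cannot be reached by cruder bounds on the first interval.
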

\begin{proof}
    By doing a change of variable $y=nx$ the expression on the left becomes
    \begin{align*}
        \max_{0<x<k/n} \left|x\left(\frac1n - x\right) \cdot \ldots \cdot \left(\frac{k}{n} - x\right)\right| = \frac{1}{n^{k+1}}  \max_{0<y<k} \left| y(1-y) \cdot \ldots \cdot (k-y) \right|.
    \end{align*}
    By analyzing the function $\left| y(1-y)(2-y) \cdot \ldots \cdot (k-y) \right|$ we see that its maximum on the interval $(0,k)$ is attained for the first time already on the interval $(0,1)$.
    If $k \geq 3$ we have the following
    \begin{align*}
        \max_{0<y<k} \left| y(1-y) \ldots (k-y) \right| &\leq \max_{0<y<1} |(4-y)\ldots (k-y)| \cdot \max_{0<y<1} |y(1-y)(2-y)(3-y)| \\
        & \leq \frac{k!}{3!} \max_{0<y<1} y(1-y)(2-y)(3-y).
    \end{align*}
    By taking the derivative we can see that the function $y(y-1)(y-2)(y-3)$ achieves its maximum $1$ for $y=(3 \pm \sqrt{5})/2$, which finally implies that
    \begin{align*}
        \max_{0<x<k/n} \left|x\left(\frac1n - x\right) \cdot \ldots \cdot \left(\frac{k}{n} - x\right)\right| \leq \frac{k!}{6n^{k+1}}.
    \end{align*}
    Similarly for $k=2$ we need to analyze the function $y(y-1)(y-2)$, whose maximum $\frac{2}{3\sqrt{3}} $ is achieved for $y = 1 \pm \frac{1}{\sqrt{3}}$, from which the claim follows.
\end{proof}
\begin{lemma}
Let $k\geq 2$ and $n \geq 2$. Then 
\begin{align*}
    |L_{u,j}^* (1)| \leq  n^{L-\ell+1}\frac{(e^\frac{j}{n} - 1) (k!)^\ell}{(\ell-1)!(c(k)n^{k+1})^{\ell-1}n^k},
\end{align*}
where $c(k) = 6$ for $k \geq 3$ and $c(2)= 3\sqrt{3}$.
\end{lemma}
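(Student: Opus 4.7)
The plan is to work from the integral representation
\[
L_{u,j}(1) = \int_0^{j/n} e^{j/n - x}\,\Omega(x)\,dx
\]
obtained in Section 4, move absolute values inside the integral, and bound the integrand uniformly on $[0,j/n]\subseteq[0,k/n]$. Because $\ell_u=\ell-1$ while $\ell_s=\ell$ for $s\neq u$, the integrand factors cleanly as
\[
|\Omega(x)| = \Bigl(\prod_{s=0}^{k}|s/n - x|\Bigr)^{\!\ell-1}\prod_{s=0,\,s\neq u}^{k}|s/n - x|,
\]
separating a ``high power'' piece, to which the previous lemma applies, from a single leftover factor that needs its own treatment.

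For the $(\ell-1)$-st power, the previous lemma gives $\prod_{s=0}^{k}|s/n - x|\leq k!/(c(k)n^{k+1})$ on $(0,k/n)$ immediately. For the leftover I would establish the auxiliary bound $\prod_{s\neq u}|s/n - x|\leq k!/n^{k}$ on $[0,k/n]$. Substituting $y=nx\in[0,k]$ reduces this to $\prod_{s\neq u}|s-y|\leq k!$; locating $y$ in some unit interval $[m,m+1]$, the elementary pointwise estimates $|s-y|\leq m+1-s$ for $s\leq m$ and $|s-y|\leq s-m$ for $s\geq m+1$ yield
\[
\prod_{s\neq u}|s-y| \leq \frac{(m+1)!(k-m)!}{\max(m+1-u,\,u-m)} \leq (m+1)!(k-m)!,
\]
in both cases $u\leq m$ and $u\geq m+1$ (the denominator being at least $1$). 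A short induction on $k$ then gives $(m+1)!(k-m)!\leq k!$ for all $0\leq m\leq k-1$, completing the auxiliary bound.

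Finally, $\int_0^{j/n}e^{j/n-x}\,dx = e^{j/n}-1$, and multiplying the three bounds together with the normalizing factor $n^{L-\ell+1}/(\ell-1)!$ (the same factor that makes the coefficients of $A_{u,j}$ integral) produces
\[
|L_{u,j}^{\star}(1)| \leq n^{L-\ell+1}\,\frac{(e^{j/n}-1)(k!)^{\ell}}{(\ell-1)!\,(c(k)n^{k+1})^{\ell-1}n^{k}}.
\]
The main obstacle is the auxiliary bound $\prod_{s\neq u}|s-y|\leq k!$: it is sharp (attained at $(u,y)\in\{(0,0),(k,k)\}$) and intuitively clear, but its careful verification is the only genuinely combinatorial step in the argument; all other ingredients (the integral representation, the preceding product lemma, and the exponential integral) are already at our disposal.
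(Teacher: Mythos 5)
Your proposal is correct and follows essentially the same route as the paper: split $|\Omega(x)|=\bigl(\prod_{s}|s/n-x|\bigr)^{\ell-1}\prod_{s\neq u}|s/n-x|$, bound the first factor by the preceding lemma, the second by $k!/n^{k}$, and integrate $e^{j/n-x}$ to get $e^{j/n}-1$. The only difference is that you supply a careful combinatorial verification of $\prod_{s\neq u}|s-y|\leq k!$ on $[0,k]$, which the paper simply asserts; that is a worthwhile addition, not a deviation.
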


\begin{proof}
Let $j \in \{1,\ldots,k\}$. By the definition of $|L_{u,j}^* (1)|$ we have
\begin{align*}
    |L_{u,j}^* (1)| (\ell-1)! & =n^{L-\ell+1}e^\frac{j}{n} \int_0^\frac{j}{n} e^{-x} \frac{\prod_{r=0}^k |\frac{r}{n}-x|^\ell}{|\frac{u}{n}-x|} dx \\
    & =n^{L-\ell+1}e^\frac{j}{n} \int_0^\frac{j}{n} e^{-x} \prod_{r=0}^k |\frac{r}{n}-x|^{\ell-1} \frac{\prod_{r=0}^k |\frac{r}{n}-x|}{|\frac{u}{n}-x|} dx \\
    & \leq n^{L-\ell+1}e^\frac{j}{n} \int_0^\frac{j}{n} e^{-x} \prod_{r=0}^k |\frac{r}{n}-x|^{\ell-1} \frac{k!}{n^k} dx. \\
\end{align*}
Because $j \geq k$, we have that $\max_{0<x<j/n} \prod_{r=0}^k |\frac{r}{n}-x| \leq \max_{0<x<k/n} \prod_{r=0}^k |\frac{r}{n}-x|$ which is at most $\frac{k!}{c(k)n^{k+1}}$ due to the previous lemma. So we further have
\begin{align*}
    |L_{u,j}^* (1)| (\ell-1)! & \leq n^{L-\ell+1}e^\frac{j}{n} \int_0^\frac{j}{n} e^{-x} \left(\frac{k!}{c(k)n^{k+1}}\right)^{\ell-1} \frac{k!}{n^k} dx \\
    & \leq n^{L-\ell+1}\frac{(k!)^l}{(c(k)n^{k+1})^{\ell-1}n^k} e^\frac{j}{n} \int_0^\frac{j}{n} e^{-x} dx \\
    & \leq n^{L-\ell+1}\frac{(k!)^\ell}{(c(k)n^{k+1})^{\ell-1}n^k} (e^\frac{j}{n} -1) \\
\end{align*} 
\end{proof}
\begin{lemma}

Let $k \geq 2$ and $c(k)$ as in the previous lemma. We have
\[
\sum_{j=1}^k |L_{u,j}^*|\leq \frac{(k!)^{\ell}}{c(k)^{\ell-1}(\ell-1)!}n^{2-\ell}e^{(k+1)/n}.
\]
\end{lemma}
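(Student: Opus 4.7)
The plan is to apply the previous lemma termwise to $|L_{u,j}^*(1)|$, simplify the resulting power of $n$, and then bound the remaining finite exponential sum by a geometric-series estimate.

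First I compute the exponent of $n$ in the bound of the previous lemma. Since exactly one of $\ell_0,\ldots,\ell_k$ equals $\ell-1$ and the remaining $k$ equal $\ell$, one has $L=(k+1)\ell-1$, so that $L-\ell+1=k\ell$, and
\[
\frac{n^{L-\ell+1}}{(c(k)n^{k+1})^{\ell-1}\,n^k}
= \frac{n^{k\ell}}{c(k)^{\ell-1}\,n^{(k+1)(\ell-1)+k}}
= \frac{n^{1-\ell}}{c(k)^{\ell-1}}.
\]
Substituting into the previous lemma gives
\[
|L_{u,j}^*(1)| \leq \frac{(k!)^\ell}{c(k)^{\ell-1}(\ell-1)!}\,(e^{j/n}-1)\, n^{1-\ell},
\]
and summing over $j$ factors out everything independent of $j$. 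So the entire statement reduces to proving $\sum_{j=1}^k(e^{j/n}-1)\leq n\,e^{(k+1)/n}$.

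For that, I would drop the $-1$ and sum the geometric series:
\[
\sum_{j=1}^k (e^{j/n}-1) \leq \sum_{j=1}^k e^{j/n} = \frac{e^{(k+1)/n}-e^{1/n}}{e^{1/n}-1} \leq \frac{e^{(k+1)/n}}{e^{1/n}-1}.
\]
The elementary inequality $e^x\geq 1+x$ for $x\geq 0$ yields $e^{1/n}-1\geq 1/n$, hence $\sum_{j=1}^k(e^{j/n}-1)\leq n\,e^{(k+1)/n}$. Combining with the displayed bound on $|L_{u,j}^*(1)|$ turns the factor $n^{1-\ell}$ into $n^{2-\ell}$ and gives precisely
\[
\sum_{j=1}^k |L_{u,j}^*(1)| \leq \frac{(k!)^\ell}{c(k)^{\ell-1}(\ell-1)!}\,n^{2-\ell}\,e^{(k+1)/n}.
\]

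There is no real obstacle: the computation is entirely driven by the previous lemma. The only bookkeeping step is unwinding the power of $n$ via $L=(k+1)\ell-1$, and the only genuine estimate is the geometric-series bound combined with $e^{1/n}-1\geq 1/n$, both of which are elementary.
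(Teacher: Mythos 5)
Your proof is correct and follows essentially the same route as the paper: apply the previous lemma termwise, bound $\sum_j(e^{j/n}-1)$ by the geometric series together with $e^{1/n}-1\geq 1/n$, and collapse the powers of $n$ to $n^{2-\ell}$. Your bookkeeping $L=(k+1)\ell-1$ is in fact the correct value (the paper's proof states $L=k\ell-1$, which is a typo, since one of the $k+1$ exponents equals $\ell-1$ and the rest equal $\ell$).
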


\begin{proof}
We have the following:
\[
\sum_{j=1}^k (e^{j/n}-1)<\sum_{j=1}^ke^{j/n}=\frac{e^{(k+1)/n}-e^{1/n}}{e^{1/n}-1}<\frac{e^{(k+1)/n}}{e^{1/n}-1}.
\]
Since
\[
e^{1/n}-1=\int_0^{1/n}e^xdx>\frac{1}{n},
\]
this can be further estimated to
\[
\sum_{j=1}^k (e^{j/n}-1)<ne^{(k+1)/n}.
\]
By summing up the above estimation for $j = 1, \dots, k$ we get
\[
\sum_{j=1}^k |L_{u,j}^*|\leq n^{L-\ell+1}\frac{(k!)^\ell}{(c(k)n^{k+1})^{\ell-1}n^k (\ell-1)!} ne^{(k+1)/n}.
\]
We can further simplify the above expression, by noticing that
\begin{align*}
    \frac{n^{L-\ell+1}n}{(n^{k+1})^{\ell-1}n^k}=n^{2-\ell},
\end{align*}
which follows from $L = k\ell-1$. This finally gives us the bound
\begin{align*}
    \sum_{j=1}^k |L_{u,j}^*|\leq \frac{(k!)^{\ell}}{c(k)^{\ell-1}(\ell-1)!}n^{2-\ell}e^{(k+1)/n}.
\end{align*}
\end{proof}
To make this bound suitable for application in $r(\ell)=\exp(R(\ell)$ we need to simplify it further.
\begin{lemma}
Let $k\geq 3$ and $\ell\geq e^{s(k,n)} = e^{(k+n)(\log(k+n))^2}$. We have
\[
\sum_{j=1}^k |L_{u,j}^*|\leq \exp\left(-\ell\log\ell+\ell(k \log k-0.81k-\log n+0.174)\right).
\]
For $k=2$ we have
\[
\sum_{j=1}^2 |L_{u,j}^*|\leq \exp\left( -\ell \log \ell -0.64 \ell \right).
\]
\end{lemma}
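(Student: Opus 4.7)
The plan is to take the logarithm of the bound from the previous lemma, apply Stirling's formula, and then absorb all remaining lower-order terms using the hypothesis $\ell \geq e^{s(k,n)}$. Starting from
\[
\sum_{j=1}^k |L_{u,j}^*| \leq \frac{(k!)^\ell}{c(k)^{\ell-1}(\ell-1)!} n^{2-\ell} e^{(k+1)/n},
\]
I would pass to logarithms and use Stirling in the forms $\log(k!) \leq k\log k - k + \tfrac12 \log(2\pi k) + \tfrac{1}{12 k}$ (upper) and $\log((\ell-1)!) \geq (\ell-1)\log(\ell-1) - (\ell-1) + \tfrac12 \log(2\pi(\ell-1))$ (lower). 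Via the identity $(\ell-1)\log(\ell-1) = \ell\log\ell - \log\ell - (\ell-1)\log\!\bigl(1 + \tfrac{1}{\ell-1}\bigr)$, whose last piece tends to $1$ from below, the target leading term $-\ell\log\ell$ is extracted cleanly.

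After regrouping, the bound on $\log \sum_{j=1}^k |L_{u,j}^*|$ becomes
\[
-\ell\log\ell + \ell\Bigl[\,k\log k - k + \tfrac12 \log(2\pi k) + \tfrac{1}{12k} - \log c(k) + 1 - \log n\,\Bigr] + E(\ell, k, n),
\]
where $E$ collects the $\log\ell$, $\log n$, $(k+1)/n$ and additive constants. For $k \geq 3$ we have $c(k)=6$, and I would verify directly that
\[
-0.19 k + \tfrac12 \log(2\pi k) + \tfrac{1}{12k} - \log 6 + 1 - 0.174 \leq 0.
\]
This is tightest at $k=3$ (yielding roughly $-0.04$ of slack) and improves linearly in $k$ because of the $-0.19 k$ term. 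Since $\ell \geq \exp((k+n)(\log(k+n))^2)$ is astronomically larger than any polylogarithmic expression in $k$, $n$, the remainder $E(\ell,k,n)/\ell$ is easily dominated by that slack, completing the bound for $k \geq 3$.

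For $k=2$ the same procedure applies with $c(2)=3\sqrt{3}$; collecting $k\log k = 2\log 2$, $-k = -2$, $\tfrac12\log(4\pi)$, $-\log(3\sqrt 3)$, $+1$, together with the $-\log n \leq -\log 2$ contribution (since $n \geq 2$), yields a single numerical constant comfortably below $-0.64$ once the error $E$ is absorbed.

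The main obstacle is the borderline case $k=3$: the inequality above has only about $0.04$ of numerical room, so every Stirling remainder and every $\log\ell/\ell$ correction must be tracked explicitly and shown to fit. The rest is routine bookkeeping; the algebraic skeleton is identical to the estimates in the preceding corollary on $A_{u,0}(1)$.
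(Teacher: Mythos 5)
Your proposal is correct and follows essentially the same route as the paper: take logarithms of the bound from the preceding lemma, apply Stirling's formula upward to $k!$ and downward to $(\ell-1)!$ to extract the $-\ell\log\ell$ term, check the resulting coefficient of $\ell$ numerically (with the tightest case at $k=3$, where the paper likewise has only a few hundredths of slack), and absorb the $\log\ell$, $\log n$ and constant remainders using $\ell\geq e^{s(n,k)}$. The only cosmetic difference is that the paper writes $\log(\ell-1)!=\log\ell!-\log\ell$ before applying Stirling, whereas you apply Stirling to $(\ell-1)!$ directly; the arithmetic is equivalent.
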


\begin{proof}
First let $k \geq 3$. We need to simplify the following expression:
\[
\log\left(\frac{(k!)^{\ell}}{6^{\ell-1}(\ell-1)!}n^{2-\ell} e^{(k+1)/n} \right)=\ell \log(k!)-(\ell-1)\log 6-\log (\ell-1)! +(2-\ell)\log n+\frac{k+1}{n}.
\]
We have $\log(\ell-1)!=\log \ell!-\log \ell$. Further,  we can use Stirling's formula to bound the factorials:
\[
\sqrt{2\pi \ell}\left(\frac{\ell}{e}\right)^{\ell}e^{1/(12\ell+1)}<\ell!<\sqrt{2\pi \ell}\left(\frac{\ell}{e}\right)^{\ell}e^{1/(12\ell)}.
\]
Hence
\[
\log \ell!>\frac{1}{2}\log(2\pi)+\frac{1}{2}\log \ell+\ell \log \ell-\ell+\frac{1}{12\ell+1}
\]
and similarly for $k!$:
\[
\log k!<\frac{1}{2}\log(2\pi)+\frac{1}{2}\log k+k \log k-k+\frac{1}{12k}.
\]
Hence, we have
\begin{align*}
\log\left(\frac{(k!)^{\ell}}{6^{\ell-1}(\ell-1)!}n^{2-\ell}e^{(k+1)/n} \right)
=&\ell \left(\frac{1}{2}\log(2\pi)+\frac{1}{2}\log k+k \log k-k+\frac{1}{12k}\right)-(\ell-1)\log 6\\
&-\log \ell! + \log \ell+(2-\ell)\log n+\frac{k+1}{n}\\
=&\ell \left(\frac{1}{2}\log(2\pi)+\frac{1}{2}\log k+k \log k-k+\frac{1}{12k}\right)-(\ell-1)\log 6+\log \ell\\
&-\left(\frac{1}{2}\log(2\pi)+\frac{1}{2}\log \ell+\ell \log \ell-\ell+\frac{1}{12\ell+1}\right) +(2-\ell)\log n+\frac{k+1}{n}\\
=&-\ell\log\ell+\ell(k \log k-0.81k-\log n+0.16)+\frac{1}{2}\log \ell \\
&+2\log n+\frac{k+1}{n}+0.88,
\end{align*}
because $\log 6-\frac{1}{2}\log(2\pi)-\frac{1}{12\ell+1}<0.88$ and $\frac{1}{2}\log(2\pi)-\log 6+\frac{1}{12k}+1<0.16$ and $\frac{1}{2}\log k-k<-0.81k$. We can further simplify by using the inequality
\[
\left(\frac{1}{2}\log \ell+2\log n+\frac{k+1}{n}+0.88\right)<0.00004 \ell
\] to obtain
\[
\log\left(\frac{(k!)^{\ell}}{6^{\ell-1}(\ell-1)!}n^{2-\ell}e^{(k+1)/n} \right)<-\ell\log\ell+\ell(k \log k-0.81k-\log n+0.17).
\]
Similar calculation for $k=2$ gives us explicitly:
\begin{align*}
    \log \left(\sum_{j=1}^2 |L_{u,j}^*| \right) \leq& -\ell \log \ell + \ell(2 \log 2 +\frac{1}{2}\log 2-2+\frac{1}{24}-\log(3\sqrt{3})+\frac{1}{2}\log(2 \pi)+1-\log n) \\
        & + \log(3\sqrt{3})-\frac{1}{2}\log(2 \pi)+\frac{1}{2} \log \ell-\frac{1}{12\ell+1}+2 \log n+ \frac{k+1}{n} \\
        \leq& -\ell \log \ell + \ell (0.0456 - \log n)+ 0.0035 \ell \\
        \leq& -\ell \log \ell -0.65 \ell+ 0.0035 \ell \leq -\ell \log \ell -0.64 \ell.
\end{align*}
\end{proof}

\section{Transcendence measure for $e^{1/n}$}

We are now ready to put together the bounds
\[
q(\ell)=\ell k \log \ell + \ell \left[ k \log k+k \log n +0.72k + 0.000003 \right]
\] which is true for $ k \geq 3$ and for $ k =2$ \[
q(\ell)= 2\ell  \log \ell +  \ell( 3.377257  + 2  \log n).
\]
  Estimating sum of $ L _{ u, j}^*$, for $k \geq 3$, we obtained 
\[
-r(\ell)=-\ell\log\ell+\ell(k \log k-0.81k-\log n+0.17)
\] and for $ k =2$, it is 
\[
-r(\ell)=-\ell \log \ell -0.64 \ell.
\]
Using notation from Section 3 in \cite{AMLOTA}, equations $(8)$ and $(9)$, we have
\begin{alignat*}{1}
s(n,k)&=(k+n)(\log(k+n))^2 \quad \textrm{this function is in place of $s(m)$ there}\\
a&=k\\
b&=k \log k+k\log n +0.72k + 0.000003\\
c&=1\\
d&=k \log k-0.81k-\log n+0.17.
\end{alignat*}
Now with the notation from Section 3 in \cite{AMLOTA}, equation $(10)$, we have 
\begin{alignat*}{1}
B&=b+\frac{ad}{c}=k \log k+k\log n +0.72k + 0.000003+k(k \log k-0.81k-\log n+0.17)\\
&=k \log k +0.89k + 0.000003+k^2\log k-0.81k^2\\
C&=a=k\\
D&=a+b+ae^{-s(k,n)}=k+k \log k+k\log n +0.72k + 0.000003+\frac{k}{e^{(k+n)(\log(k+n))^2}}\\
F^{-1}&=2e^{D}\\
v&=1-\frac{k \log k-0.81k-\log n+0.17}{(k+n)(\log(k+n))^2}\\
n_1&=e^{(n+k)(\log(n+k))^2}.
\end{alignat*}

Now we have
\[
|\lambda_0+\lambda_1e^{1/n}+\cdots +\lambda_ke^{k/n}|>F(2H)^{-a/c-\epsilon(H)},
\]
where
\[
\epsilon(H)=\frac{1}{\log(2H)}\left(Bz\left(\frac{\log(2H)}{v}\right)+C\log\left(z\left(\frac{\log(2H)}{v}\right)\right)\right).
\]
The term $H^{-a/c}=H^{-k}$ will form the main term, and everything else will be put together into the second term in the exponent (of $H$). This second term will be formed of the terms
\[
F2^{-a/c}(2H)^{-\epsilon(H)}
\]

For large $k$,  
we have 
$$1< | \Lambda | 2 (2H)^{\frac{a}{c}} e^{ \epsilon(H) \log(2H)+D} =  |\Lambda| H^{\frac{a}{c}+Y}= |\Lambda| H^{k+Y},$$ where 

\begin{align*}
    Y: &= \frac{1}{\log H} \left(Bz\left(\frac{\log(2H)}{v}\right)+C\log\left(z\left(\frac{\log(2H)}{v}\right)\right)+ D + (k +1)\log 2\right)\\
    & \leq \frac{1}{\log H} \left( \frac{uB}{v} \frac{\log(2H)}{\log \log(2H)} +k \log \left( \frac{u}{v} \frac{\log(2H)}{\log \log(2H)} \right) + D +(k +1)\log 2 \right)
    \end{align*}

and  $$  u = 1 + \frac{\log(s(n,k))}{s(n,k)}.$$ We use the fact that $ \log H \geq  s (n, k) e^{s(n,k)}$. Not, we need to estimate the terms involving $D$ and $B$. 
    \begin{align*}
        D+(k +1)\log 2  & =  k+k \log k+k\log n +0.72k + 0.000003+\frac{k}{e^{(k+n)(\log(k+n))^2}} +(k +1)\log 2 \\
        & = k + k \log k + k \log n +k \left( 0.72 + \frac{0.000003}{k} + \frac{1}{e^{(k+n)(\log(k+n))^2}}  + \log 2 + \frac{\log 2}{k}\right) \\
        & \leq k \log k + k \log n +3.4 k. 
    \end{align*}
     For the next term, we observe
      $$ k \log \left( \frac{u}{v} \frac{\log(2H)}{\log \log(2H)} \right) \leq  k \log( u \log(2H))  .$$
Therefore, we get 
\begin{align*}
    Y &  \leq \frac{1}{\log H} \left( \frac{uB}{v} \frac{\log(2H)}{\log \log(2H)} +k  \log( u \log(2H)) + k \log k + k \log n +3.4 k \right)\\
    & \leq \frac{1}{\log H} \left( \frac{uB}{v} \frac{\log(2H)}{\log \log(2H)} +k  \log( 2 \log H)+ k \log (2 \log 2) + k \log k + k \log n +3.4 k \right)\\
    & \leq \frac{1}{\log H} \left( \frac{uB}{v} \frac{\log(2H)}{\log \log(2H)} +k  \log( 2 \log H)+ \frac{k}{2} + k \log k + k \log n +3.4 k \right)\\
    & \leq \frac{1}{\log H} \left( \frac{uB}{v} \frac{\log(2H)}{\log \log(2H)} + \frac{69}{10} k \log(2 \log H) \right)\\
    & \leq \frac{1}{ \log \log H} \left( \frac{\log(2H)}{\log H} \cdot \frac{uB}{v} +  \frac{ 6. 9 \log \log H  \cdot k  \log( 2 \log H)}{\log H} \right) \\
    & = \frac{u}{v \log \log H} \left( B + \frac{1}{\log H} \left(\log(2) B +  \frac{ v\log \log H  \cdot 6. 9 k  \log( 2 \log H)}{u} \right) \right)\\
  \end{align*}
  We have
 \[ 
Y   \leq \begin{cases}
& \frac{u}{v \log \log H} \left( B +0.744754115 \right), \mbox{ for } k =2 \\
& \frac{u}{v \log \log H} \left( B + 0.04386773 \right), \mbox{ for } k =3\\
& \frac{u}{v \log \log H} \left( B + 0.00075786 \right), \mbox{ for } k =4\\
 & \frac{u}{v \log \log H} \left( B + 0.00000412 \right), \mbox{ for } k =5\\ 
& \frac{u}{v \log \log H} \left( B +  7. 976 \times 10^{-9} \right), \mbox{ for } k =6\\
 \end{cases}
 \]
   For $ k \geq 6,$ we observe
   $$ Y \leq \frac{u}{v \log \log H} \left( B +   10^{-8} \right).$$ 
\smallskip
   
For calculating the small values of $k$, we do it case by case. \\

For $k =2$,   take $n=2$ and recall
\begin{align*}
    & b = 3.377257  + 2  \log 2\\
    & d = -0.64\\
    & a =2 \mbox{ and } d =1
\end{align*} which implies $$ 3.878864 \mbox { and } D = 7.159781 \qquad \frac{u}{v} \leq 1.151906$$

We also have $ \log H \geq s(2,2) e^{s(2,2)} = 7.69 \times e^{7.69}$

\bigskip
\begin{align*}
    Y    & \leq \frac{1}{\log H} \left( \frac{uB}{v} \frac{\log(2H)}{\log \log(2H)} +2 \log \left( \frac{u}{v} \frac{\log(2H)}{\log \log(2H)} \right) + D +3\log 2 \right)\\
     &  \leq \frac{ 9.202255}{\log \log H}
\end{align*}

Let us now look at the case $k\geq 3$. For simplicity's sake, write
\[
Y\leq \frac{u}{v\log\log H}(B+\theta)
\]

Define
\[
f(k,n)=\frac{u}{v k^2\log k} \left( B +   \theta \right)=\frac{\left(1+\frac{\log((k+n)(\log(k+n))^2)}{(k+n)(\log(k+n))^2}\right)\left(1+\frac{1}{k}+\frac{0.89}{k\log k}+\frac{0.000003+\theta}{k^2\log k}-\frac{0.81}{\log k}\right)}{1-\frac{k\log k-0.81k-\log n+0.17}{(k+n)(\log(k+n))^2}}
\]
The expression can be further simplified to
\[
f(k,n)=\frac{\left(1+\frac{1}{(k+n)\log(k+n)}+\frac{2\log\log(k+n)}{(k+n)(\log(k+n))^2}\right)\left(1+\frac{1}{k}+\frac{0.89}{k\log k}+\frac{0.000003+\theta}{k^2\log k}-\frac{0.81}{\log k}\right)}{1-\frac{k\log k-0.81k-\log n+0.17}{(k+n)(\log(k+n))^2}}
\]
Let us look at the numerator. We have
\begin{multline*}
    \left(1+\frac{1}{(k+n)\log(k+n)}+\frac{2\log\log(k+n)}{(k+n)(\log(k+n))^2}\right)\left(1+\frac{1}{k}+\frac{0.89}{k\log k}+\frac{0.000003+\theta}{k^2\log k}-\frac{0.81}{\log k}\right)\\=1+\frac{1}{k}+\frac{0.89}{k\log k}+\frac{0.000003+\theta}{k^2\log k}
-\frac{0.81k}{\log k}+\frac{1}{(k+n)\log(k+n)}+\frac{1}{k(k+n)\log(k+n)}+\frac{0.89}{k(k+n)\log k\log(k+n)}\\+\frac{0.000003+\theta}{k^2\log k(k+n)\log(k+n)}-\frac{0.81}{(k+n)\log(k+n)\log k}+\frac{2\log \log (k+n)}{(k+n)(\log(k+n))^2}+\frac{2\log\log(k+n)}{k(k+n)(\log(k+n))^2}\\+\frac{2\cdot 0.89\log\log(k+n)}{k(k+n)\log k(\log(k+n))^2}+\frac{2\cdot (0.000003+\theta)\log\log(k+n)}{k^2(k+n)\log k(\log(k+n))^2}-\frac{2\cdot 0.81\log\log (k+n)}{(k+n)(\log(k+n))^2\log k}.\end{multline*}
We can now verify using WolframAlpha that
\[
    \frac{1}{k(k+n)\log(k+n)}+\frac{0.89}{k(k+n)\log k\log(k+n)}+\frac{0.000003+\theta}{k^2\log k(k+n)\log(k+n)}-\frac{0.81}{(k+n)\log(k+n)\log k}<0
\]
and
\begin{multline*}
    \frac{2\log\log(k+n)}{k(k+n)(\log(k+n))^2}\\+\frac{2\cdot 0.89\log\log(k+n)}{k(k+n)\log k(\log(k+n))^2}+\frac{2\cdot (0.000003+\theta)\log\log(k+n)}{k^2(k+n)\log k(\log(k+n))^2}-\frac{2\cdot 0.81\log\log (k+n)}{(k+n)(\log(k+n))^2\log k}<0.
\end{multline*}
for $k\geq 3$. Furthermore, the denominator can be written as
\begin{multline*}
1-\frac{k\log k}{(k+n)(\log(k+n))^2}+\frac{0.81k}{(k+n)(\log(k+n))^2}+\frac{\log n}{(k+n)(\log (k+n))^2}-\frac{0.17}{(k+n)(\log(k+n))^2}\\ >1-\frac{k\log k}{(k+n)(\log(k+n))^2}+\frac{0.81k}{(k+n)(\log(k+n))^2},
\end{multline*}
since $\frac{\log n}{(k+n)(\log (k+n))^2}-\frac{0.17}{(k+n)(\log(k+n))^2}>0$.
We can thus estimate
\[
f(n,k)<\frac{ 1+\frac{1}{k}+\frac{0.89}{k\log k}+\frac{0.000003+\theta}{k^2\log k}
-\frac{0.81}{\log k}+\frac{1}{(k+n)\log(k+n)}+\frac{2\log \log (k+n)}{(k+n)(\log(k+n))^2}}{1-\frac{k\log k}{(k+n)(\log(k+n))^2}+\frac{0.81k}{(k+n)(\log(k+n))^2}}.
\]
If $k=3$, we have $f(2,3)<1.145$ and $f(3,3)<1.08$, so $f(2,3)$ gives the larger value.

For $k=4$, we have $f(2,4)<1.114$, $f(3,4)<1.05$ and $f(4,4)<1$, so $f(2,4)$ yields the largest bound.
Estimating
\[
\frac{1}{(k+n)\log(k+n)}+\frac{2\log \log (k+n)}{(k+n)(\log(k+n))^2}<\frac{0.81k}{(k+n)(\log(k+n))^2}
\]
(which is based on the fact that the second term on the left side is at most $1+\frac{2}{e}<1.74$ and the term on the right side is at least $>1.75$ if $k\geq 5$ and $n\leq k$), and
\[
\frac{k\log k}{(k+n)(\log(k+n))^2}<\frac{1}{\log k},
\]
we can further simplify the expression:
\[
f(n,k)<\frac{ 1+\frac{1}{k}+\frac{0.89}{k\log k}+\frac{0.000003+\theta}{k^2\log k}
-\frac{0.81}{\log k}+\frac{0.81k}{(k+n)(\log(k+n))^2}}{1-\frac{1}{\log k}+\frac{0.81k}{(k+n)(\log(k+n))^2}}
\]
Now
\begin{multline*}
\frac{1}{k}+\frac{0.89}{k\log k}+\frac{0.000003+\theta}{k^2\log k}=\frac{1}{\log k}\left(\frac{\log k}{k}+\frac{0.89}{k}+\frac{0.000003+\theta}{k^2\log k}\right)\\ <\frac{1}{\log k}\left(0.299+0.149+0.0000003\right)<\frac{0.5}{\log k},
\end{multline*}
where the terms are estimated using $k\geq 5$. Hence
\[
f(n,k)<\frac{ 1
-\frac{0.31}{\log k}+\frac{0.81k}{(k+n)(\log(k+n))^2}}{1-\frac{1}{\log k}+\frac{0.81k}{(k+n)(\log(k+n))^2}}=1+\frac{0.69}{\log k-1+\frac{0.81k\log k}{(k+n)(\log(k+n))^2}}<1+\frac{0.69}{\log k-1}
\]
Hence,
\[
Y\leq \frac{k^2\log k}{\log \log H}f(n,k)<\frac{k^2\log k}{\log \log H}\left(1+\frac{0.69}{\log k-1}\right).
\].

%


\begin{thebibliography}{}
\bibitem{borel1899} Borel \'E. Sur la nature arithm\'etique du nombre $e$, C. R. Acad. Sci. Paris, 128 (1899), 596–599.
\bibitem{AMKATA} Ernvall-Hytönen, A.-M., Leppälä, K., \& Matala-aho, T. (2015). An explicit Baker-type lower bound of exponential values. Proceedings of the Royal Society of Edinburgh: Section A Mathematics, 145(6), 1153-1182. \verb"doi:10.1017/S0308210515000049"
\bibitem{AMLOTA} Ernvall-Hytönen, A.-M., Matala-aho, T. \& Seppälä, L.  (2019). On Mahler’s Transcendence Measure for e. Constr Approx 49, 405–444. \verb"https://doi.org/10.1007/s00365-018-9429-3"
\bibitem{hata1995} Hata M. Remarks on Mahler’s Transcendence Measure for e, J. Number Theory 54 (1995), 81–92.
\bibitem{hermite1873} Hermite Ch. Sur la fonction exponentielle, C. R. Acad. Sci. 77 (1873), 18–24, 74–79, 226–233, 285–293.
\bibitem{khassasrinivasan1991} Khassa D. S. and Srinivasan S. A transcendence measure for e, J. Indian  Math. Soc. 56 (1991), 145–152.
\bibitem{Mahler} Mahler, K. (1975). On a paper by A. Baker on the approximation of rational powers of e Tom 27 / 1975
Acta Arithmetica 27, 61-87. \verb"doi:10.4064/aa-27-1-61-87"
\bibitem{mahler1931} Mahler K. Zur Approximation der Exponentialfunktion und des Logarithmus. Teil I, J. Reine
Angew. Math. 166 (1931), 118–136.
\bibitem{popken1928} Popken J. Sur la nature arithm\'etique du nombre e, C. R. Acad. Sci. Paris, 186 (1928), 1505–1507.
\bibitem{popken1929} Popken J. Zur Transzendenz von e, Math. Z. 29 (1929), 525–541.
\end{thebibliography}
\end{document}